\documentclass[a4paper,10pt]{amsart}


\usepackage{amsmath}
\usepackage{amsthm}
\usepackage{amssymb}
\usepackage{amsfonts}
\usepackage{mathrsfs}  
\usepackage{enumitem} 
\usepackage{xcolor}
\usepackage{soul}
\usepackage[bookmarks=false,hyperindex,pdftex,colorlinks,citecolor=blue,urlcolor=cyan]{hyperref}
\usepackage[a4paper,lmargin=3cm,rmargin=3cm,tmargin=4cm,bmargin=4cm,marginparwidth=2.8cm,marginparsep=1mm]{geometry} 



\DeclareMathOperator{\lspan}{span}                          
\DeclareMathOperator{\diam}{diam}                           
\DeclareMathOperator{\rad}{rad}                             
\DeclareMathOperator{\Lip}{Lip}                             

\newcommand{\NN}{\mathbb{N}}                                
\newcommand{\ZZ}{\mathbb{Z}}                                
\newcommand{\RR}{\mathbb{R}}                                
\newcommand{\ep}{\varepsilon}

\newcommand{\abs}[1]{\left|{#1}\right|}                     
\newcommand{\pare}[1]{\left({#1}\right)}                    
\newcommand{\set}[1]{\left\{{#1}\right\}}                   
\newcommand{\norm}[1]{\left\|{#1}\right\|}                  
\newcommand{\duality}[1]{\left<{#1}\right>}                 
\newcommand{\cl}[1]{\overline{#1}}                          
\newcommand{\restrict}{\mathord{\upharpoonright}}           
\newcommand{\compl}{\stackrel{c}{\hookrightarrow}}          

\newcommand{\lipfree}[1]{\mathcal{F}({#1})}                 
\newcommand{\lipnorm}[1]{\norm{#1}_L}                       

\renewcommand{\leq}{\leqslant}
\renewcommand{\geq}{\geqslant}


\theoremstyle{plain}
\newtheorem{theorem}{Theorem}[section]
\newtheorem{lemma}[theorem]{Lemma}
\newtheorem{corollary}[theorem]{Corollary}
\newtheorem{proposition}[theorem]{Proposition}

\newtheorem*{claim*}{Claim}


\newenvironment{customtheorem}[1]
  {\innercustomthm}
  {\endinnercustomthm}

\newenvironment{customlemma}[1]
  {\innercustomlemma}
  {\endinnercustomlemma}

\theoremstyle{definition}
\newtheorem*{definition*}{Definition}
\newtheorem{definition}[theorem]{Definition}
\newtheorem{example}[theorem]{Example}
\newtheorem{question}{Question}

\theoremstyle{remark}



\begin{document}

\title{Lipschitz spaces over non-porous sets}

\author[R. J. Aliaga]{Ram\'on J. Aliaga}
\address[R. J. Aliaga]{Instituto Universitario de Matem\'atica Pura y Aplicada,
Universitat Polit\`ecnica de Val\`encia,
Camino de Vera S/N,
46022 Valencia, Spain}
\email{raalva@upv.es}

\date{} 


\begin{abstract}
Let $M$ be a subset of $\mathbb{R}^n$. If $M$ is not porous, in particular if it has positive $n$-dimensional Lebesgue measure, we prove that the Lipschitz spaces $\mathrm{Lip}_0(M)$ and $\mathrm{Lip}_0(\mathbb{R}^n)$ are linearly isomorphic. The result also holds more generally if $\mathbb{R}^n$ is replaced with a Carnot group equipped with its Carnot-Carath\'eodory metric.
\end{abstract}

\subjclass[2020]{Primary 46E15, 46B03; Secondary 53C17}

\keywords{Lipschitz function, porous set, Lebesgue measure, Carnot group, Lipschitz-free space}

\maketitle
\thispagestyle{empty} 


\section{Introduction}

Let $(X,d)$ be a metric space, and fix a designated \textit{base point} $0\in X$ (we then say that the metric space is \emph{pointed}). The \emph{Lipschitz space} $\Lip_0(X)$ is the Banach space of all real-valued Lipschitz functions $f:X\to\RR$ such that $f(0)=0$, endowed with the Lipschitz norm
$$
\lipnorm{f} = \sup\set{\frac{f(x)-f(y)}{d(x,y)} \,:\, x\neq y\in X} .
$$
The space $\Lip_0(X)$ has a canonical isometric predual, the \emph{Lipschitz-free space} $\lipfree{X}$, that is generated by the evaluation functionals $\delta(x)$, $x\in X$ in $\Lip_0(X)^*$, given by $\duality{f,\delta(x)}=f(x)$.
For any $M\subset X$, McShane's extension theorem guarantees that any $f\in\Lip_0(M)$ can be extended to a function $F\in\Lip_0(X)$ without increasing its Lipschitz constant. As a consequence, $\lipfree{M}$ can be isometrically identified with a subspace of $\lipfree{X}$, namely $\cl{\lspan}\set{\delta(x):x\in M}$.
We refer to the monograph \cite{Weaver2} for further information on Lipschitz and Lipschitz-free spaces.

In this note, we focus on the case where $X$ is a Banach space with its norm metric, and ask the following question: how big must a subset $M\subset X$ be so that the Lipschitz-free, or Lipschitz, space over $M$ is isomorphic to that over $X$? Of course, the former implies the latter by taking adjoints. This question is open even in the case where $M$ and $X$ are finite-dimensional spaces: it is currently unknown whether $\lipfree{\RR^n}$ and $\lipfree{\RR^m}$, or $\Lip_0(\RR^n)$ and $\Lip_0(\RR^m)$, can be isomorphic for different values of $n,m\geq 2$. Only the one-dimensional case is solved: for $n>1$, there is no operator mapping $\Lip_0(\RR)\equiv L_\infty$ onto $\Lip_0(\RR^n)$, hence $\lipfree{\RR^n}$ does not embed into $\lipfree{\RR}\equiv L_1$ (see \cite{CDW} or \cite{NaorSchechtman}). Similarly, it is currently unknown whether $\Lip_0(X)$ can be isomorphic to some $\Lip_0(\RR^n)$ when $X$ is infinite-dimensional.

Kaufmann proved in \cite{Kaufmann} that $\lipfree{B}$ is always isomorphic to $\lipfree{X}$ for any ball $B\subset X$. If $M\subset X$ contains such a ball then we have $\lipfree{B}\subset\lipfree{M}\subset\lipfree{X}$, but we cannot immediately conclude that $\lipfree{M}$ is isomorphic to $\lipfree{X}$ unless the inclusions are complemented; if they are, we do obtain an isomorphism between $\lipfree{M}$ and $\lipfree{X}$ using Pe\l czy\'nski's decomposition method. Complementation can be guaranteed when $X$ is finite-dimensional (see Proposition \ref{pr:doubling complemented}), so we deduce that $\lipfree{M}$ is isomorphic to $\lipfree{\RR^n}$ and $\Lip_0(M)$ is isomorphic to $\Lip_0(\RR^n)$ whenever $M\subset\RR^n$ has non-empty interior.
On the other hand, Candido, C\'uth and Doucha proved in \cite{CCD} that $\Lip_0(M)$ is isomorphic to $\Lip_0(\RR^n)$ whenever $M$ is a net in $\RR^n$, such as $M=\ZZ^n$; in this case, the analogous statement for Lipschitz-free spaces is false. The takeaway seems to be that the Lipschitz space $\Lip_0(M)$ over a subset $M\subset\RR^n$ is isomorphic to $\Lip_0(\RR^n)$ provided that $M$ is ``$n$-dimensional enough''.

Our aim in this work is to make this notion of ``$n$-dimensional enough'' more precise. To that end, we make a natural further generalization and consider subsets $M\subset\RR^n$ with positive $n$-dimensional Lebesgue measure. For $n=1$, the situation is well-known: given an infinite $M\subset\RR$, the space $\lipfree{M}$ is isomorphic to either $L_1$ or $\ell_1$ depending on whether $\cl{M}$ has positive measure or not, and thus $\Lip_0(M)$ is always isomorphic to $\Lip_0(\RR)$ \cite{Godard}. Our main result sheds some light on the situation for Lipschitz spaces in dimensions greater than $1$: if $M\subset\RR^n$ has positive measure then $\Lip_0(M)$ is isomorphic to $\Lip_0(\RR^n)$ (see Corollary \ref{cr:positive measure}). In fact, we are able to show this, more generally, for any $M\subset\RR^n$ that is not porous.

\begin{customtheorem}{\ref{th:porous}}
Suppose that $M\subset\RR^n$ is not porous in $\RR^n$. Then $\Lip_0(M)$ is isomorphic to $\Lip_0(\RR^n)$.
\end{customtheorem}

Theorem \ref{th:porous} generalizes both of the aforementioned results from \cite{Kaufmann} and \cite{CCD}. Our approach towards its proof can be understood as a local version of the methods used in \cite{CCD}, where $M$ was required to possess some form of denseness behavior as well as global regularity; here, we use similar methods but we only ask that $M$ exhibits such behavior locally in infinitely many points. Our proof strategy can be summarized as follows:
\begin{itemize}
\item Because $M$ is not porous, we can find a sequence $(B_n)$ of balls in $\RR^n$ which are ``separated enough'', and such that $M$ is ``increasingly dense'' in $(B_n)$ (see Proposition \ref{pr:well separated balls}).
\item On one hand, the separation condition implies that $\Lip_0(M\cap\bigcup_n B_n)$ contains a complemented copy of the sum of the Lipschitz spaces $\Lip_0(M\cap B_n)$ (see Lemma \ref{lm:well separated free 2}).
\item On the other hand, the density condition implies that the sum of the Lipschitz spaces $\Lip_0(M\cap B_n)$ contains a complemented copy of $\Lip_0(\RR^n)$ (see Lemma \ref{lm:limit inclusion}).
\item Finally, because $\dim\RR^n<\infty$, we have enough complementation relations between the resulting Lipschitz spaces and we may apply Pe\l czy\'nski's method to conclude.
\end{itemize}

As it turns out, most of our arguments remain valid in the more general framework of Carnot groups equipped with a Carnot-Carath\'eodory metric. For the purposes of this paper, Carnot groups can be considered as noncommutative generalizations of Euclidean space; see Section \ref{sec:carnot} for further details. The properties of $\RR^n$ used in our proof coincide almost exactly with those properties characterizing Carnot groups, so we are able to extend Theorem \ref{th:porous} to that setting with very little effort (see Theorem \ref{th:porous carnot}).

The paper is structured as follows. In Section \ref{sec:porous} we consider porous sets in general and the specific separation and density conditions that will be needed in our main argument. In Section \ref{sec:main} we prove our main result about Lipschitz spaces on non-porous subsets, first for Euclidean space (Section \ref{sec:banach}) and then for Carnot groups (Section \ref{sec:carnot}). Finally, in Section \ref{sec:questions} we discuss some unanswered questions related to our research.

\subsection*{Notation}

Our notation will be standard. For a metric space $M$, the closed ball with center $x\in M$ and radius $r>0$ will be denoted by $B(x,r)$. The diameter of a subset $A\subset M$ will be denoted $\diam(A)$. Every Banach space $X$ will be tacitly regarded as a metric space with the norm metric, and its closed unit ball will be denoted by $B_X$. For $1\leq p\leq\infty$, the $\ell_p$-sum of a sequence $(X_n)$ of Banach spaces will be denoted by $\pare{\bigoplus_n X_n}_p$. Given two Banach spaces $X,Y$, we will write $X\equiv Y$ if they are linearly isometric, $X\sim Y$ if they are linearly isomorphic, and $X\compl Y$ if $Y$ contains a complemented subspace that is isomorphic to $X$. Our arguments will sometimes be based on Pe\l czy\'nski's decomposition method in the following form: if $X\compl Y\compl X$ and $X\sim\pare{\bigoplus_n X}_p$ for some $p\in [1,\infty]$, then $X\sim Y$ (see e.g. \cite[Theorem 2.2.3]{AlbiacKalton}).

The choice of different base points $0$, $0'$ in a metric space $M$ leads to isometrically isomorphic Lipschitz spaces $\Lip_0(M)$, $\Lip_{0'}(M)$ as witnessed by the operator $T:\Lip_0(M)\to\Lip_{0'}(M)$ given by $(Tf)(x)=f(x)-f(0')$. Consequently, we will sometimes omit the choice of base point or change it without further mention when we only care about the isometry or isomorphism class of a Lipschitz space. We also get isometric Lipschitz spaces $\Lip_0(M)$, $\Lip_0(N)$ whenever the metric spaces $M$ and $N$ are isometric or, more generally, related by a surjective \emph{dilation}, i.e. a mapping $\varphi:M\to N$ such that $d(\varphi(x),\varphi(y))=\lambda d(x,y)$ for some fixed $\lambda>0$. If $\varphi$ is bi-Lipschitz instead, then the Lipschitz spaces are merely isomorphic. In all of those cases, the linear isometry (isomorphism) between Lipschitz spaces is weak$^*$-to-weak$^*$ continuous and hence induces a corresponding isometry (isomorphism) between the respective Lipschitz-free spaces.

\section{Porous sets and density in balls}
\label{sec:porous}

There exist several closely related notions of porosity in the literature. The precise one that will fit our purposes reads as follows.

\begin{definition}\label{def:porous}
Let $X$ be a metric space. We say that a subset $M$ of $X$ is \emph{porous} (in $X$) if there exists $\lambda>0$ such that for every ball $B(p,r)$ in $X$, where $p\in X$ and $0<r\leq\diam(X)$, there exists $x\in X$ such that $B(x,\lambda r)\subset B(p,r)\setminus M$.
\end{definition}

\noindent Informally, ``every ball in $M$ contains a hole of comparable size''. Such sets are called ``globally very porous'' by Zaj\'i\v cek \cite{Zajicek} and simply ``porous'' by V\"ais\"al\"a \cite{Vaisala}. Let us stress that we crucially require the defining condition for porosity to hold at both small and large scales, for all radii $r>0$. For instance, nets such as $\ZZ^n$ are not porous in $\RR^n$ according to our definition, as the condition fails for large $r$. In other works, the condition is only assumed to hold for small $r$ (see e.g. \cite{Zajicek}). 

The following reformulation of porosity will be useful in our arguments.

\begin{definition}\label{def:asymptotically dense}
Let $X$ be a metric space and $(B_n)=(B(p_n,r_n))$ be a sequence of closed balls in $X$. We say that a subset $M$ of $X$ is \emph{asymptotically dense} in $(B_n)$ if there exist numbers $\ep_n>0$ such that $\ep_n\to 0$ and, for $n$ large enough, $B_n\cap M$ is $\ep_nr_n$-dense in $B_n$; that is, for each $x\in B_n$ there exists $y\in B_n\cap M$ with $d(x,y)\leq\ep_n r_n$.
\end{definition}

\noindent Clearly, if $M$ is asymptotically dense in $(B_n)$ then $M$ is asymptotically dense in any subsequence thereof as well.

If there exists a sequence of balls in $X$ as in Definition \ref{def:asymptotically dense} then $M$ is not porous in $X$, as $B_n$ witnesses the failure of Definition \ref{def:porous} for $\lambda\geq\ep_n$. The converse implication may fail in general metric spaces, but it holds e.g. when the ambient space is geodesic. Recall that a metric space $X$ is \emph{geodesic} if any pair of points in $X$ can be joined by a geodesic, i.e. an isometric copy of a closed interval in $\RR$.

\begin{lemma}
\label{lm:porous balls}
Let $X$ be a geodesic metric space. If a subset $M\subset X$ is not porous, then there exists a sequence $(B_n)$ of closed balls in $X$ such that $M$ is asymptotically dense in $(B_n)$.
\end{lemma}

We need the following simple computation.

\begin{lemma}
\label{lm:geodesic lemma}
Let $X$ be a geodesic metric space and $A,M\subset X$. Suppose that for every $p\in A$ there exists $y\in M$ with $d(p,y)\leq\delta$. Then $M\cap B$ is $2\delta$-dense in $B$ for every ball $B$ contained in $A$ with radius at least $\delta$.
\end{lemma}

\begin{proof}
Let $B=B(p,r)\subset A$ with $r\geq\delta$, and fix $x\in B$. We must show that there exists $y\in M\cap B$ such that $d(x,y)\leq 2\delta$.

Suppose first that $d(x,p)\leq\delta$. Since $p\in A$, there exists $y\in M$ such that $d(p,y)\leq\delta$, and we have $y\in B$ because $\delta\leq r$. Thus $y$ is the required point, as $d(x,y)\leq d(x,p)+d(p,y)\leq 2\delta$.

Now suppose that $d(x,p)>\delta$. Since $X$ is geodesic, there exists $q\in X$ such that $d(x,q)+d(q,p)=d(x,p)$ and $d(x,q)=\delta$. Clearly $q\in B\subset A$ as $d(p,q)<d(p,x)$, so by hypothesis there exists $y\in M$ such that $d(q,y)\leq\delta$.  We have
$$
d(y,p) \leq d(y,q)+d(q,p) \leq \delta+d(x,p)-\delta \leq r
$$
hence $y\in M\cap B$, and $d(x,y)\leq d(x,q)+d(q,y) \leq 2\delta$.
\end{proof}

\begin{proof}[Proof of Lemma \ref{lm:porous balls}]
Fix an integer $n\geq 3$. Since $M$ is not porous, the failure of Definition \ref{def:porous} for $\lambda=\frac{1}{n}$ yields $p_n\in X$ and $0<r_n\leq\diam(X)$ such that $B(x,\frac{1}{n}r_n)\not\subset B(p_n,r_n)\setminus M$ for all $x\in X$. Set $B_n=B(p_n,(1-\frac{2}{n})r_n)$. If $x\in B_n$, then $B(x,\frac{1}{n}r_n)\subset B(p_n,r_n)$ and therefore $B(x,\frac{1}{n}r_n)$ must contain an element of $M$. Thus the hypothesis of Lemma \ref{lm:geodesic lemma} is satisfied for $A=B_n$ and $\delta=\frac{1}{n}r_n$ and, since $(1-\frac{2}{n})r_n\geq\frac{1}{n}r_n$, it follows that $M\cap B_n$ is $\frac{2}{n}r_n$-dense in $B_n$. Hence $M$ is asymptotically dense in $(B_n)_{n=3}^\infty$, with constants $\ep_n=\frac{2}{n}r_n\cdot\pare{(1-\frac{2}{n})r_n}^{-1}=\frac{2}{n-2}$.
\end{proof}

For our arguments in Section \ref{sec:main}, we will require the sequence of balls $(B_n)$ to satisfy an additional separation condition. Let us formalize this notion.

\begin{definition}
We say that a collection $\mathcal{C}$ of subsets of a metric space $X$ is \emph{well-separated} with respect to $x_0\in X$ if there exists $\lambda>0$ such that
\begin{equation}
\label{eq:well separated}
d(x,y)\geq\lambda\cdot(d(x,x_0)+d(y,x_0))
\end{equation}
for any choice of $x,y$ belonging to different elements of $\mathcal{C}$. We say simply that $\mathcal{C}$ is well-separated if we do not need to specify the choice of $x_0$.
\end{definition}

\noindent Note that the intersection of any pair of elements of $\mathcal{C}$ is either empty or $\set{x_0}$. Note also that \eqref{eq:well separated} is equivalent to the simpler requirement that $d(x,y)\geq\lambda d(x,x_0)$ for some (different) $\lambda>0$.

\begin{proposition}
\label{pr:well separated balls}
Let $X$ be a complete geodesic metric space. If a subset $M\subset X$ is not porous, then there exists a sequence $(B_n)$ of pairwise disjoint, well-separated closed balls in $X$ such that $M$ is asymptotically dense in $(B_n)$.
\end{proposition}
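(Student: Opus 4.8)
The plan is to start from the reformulation of non-porosity recorded just before the statement: since $M$ is not porous, there is a sequence of balls $(B_n)=(B(p_n,r_n))$ and scalars $\ep_n\to 0$ with $B_n\cap M$ eventually $\ep_n r_n$-dense in $B_n$. I will exploit two degrees of freedom. First, asymptotic density passes to any subsequence. Second, by Lemmas \ref{lm:geodesic lemma} and \ref{lm:smaller balls} I may replace each $B_n$ by \emph{any} sub-ball $B(c_n,s_n)\subseteq B_n$: if $s_n\geq\ep_n r_n$ then $M$ is $2\ep_n r_n$-dense there, so as long as $\ep_n r_n/s_n\to 0$ the new sequence remains asymptotically dense. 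The whole problem therefore reduces to selecting a subsequence together with centers $c_n\in B_n$ and radii $\ep_n r_n\ll s_n$ such that the sub-balls $B(c_n,s_n)$ are pairwise disjoint and well-separated. Since well-separation of the balls follows once the centers satisfy \eqref{eq:well separated} with respect to some $x_0$ and the radii are small relative to $d(c_n,x_0)$, it suffices to produce well-separated centers.

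The main case division is whether $\bigcup_n B_n$ is bounded. If it is unbounded, I fix any base point $x_0$; then $R_n:=\sup_{z\in B_n}d(z,x_0)$ is unbounded, and since $X$ is geodesic and unbounded one checks $R_n\geq r_n/2$ (the sphere $\partial B(p_n,r_n)$ is nonempty, so $B_n$ reaches distance $\geq r_n-d(p_n,x_0)$ from $x_0$). Passing to a subsequence with $R_n\to\infty$, I pick $z_n\in B_n$ with $d(z_n,x_0)\geq R_n/2$ and slide slightly inward along a geodesic to a center $c_n$ admitting a small ball inside $B_n$; because $\ep_n\to 0$ and $d(z_n,x_0)\geq r_n/4$, there is room to choose $s_n$ with $\ep_n r_n\ll s_n\ll d(c_n,x_0)$. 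After a further subsequence making $d(c_n,x_0)$ grow geometrically, the estimate $d(c_i,c_j)\geq d(c_j,x_0)-d(c_i,x_0)$ yields \eqref{eq:well separated}.

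If $\bigcup_n B_n$ is bounded, the centers $p_n$ lie in a fixed ball $B(x_0,C)$. Either some infinite subsequence is $\delta$-separated, in which case the $p_n$ themselves already satisfy \eqref{eq:well separated} with respect to $x_0$ (using $d(p_i,p_j)\geq\delta$ and $d(p_n,x_0)\leq C$) and I simply take small sub-balls $B(p_n,s_n)$; or the centers are totally bounded, and \emph{completeness} yields a convergent subsequence $p_n\to p^*$. In the latter case I set $x_0=p^*$ and cluster the sub-balls geometrically toward $p^*$. If $\lim r_n=r^*>0$, then all $B_n$ eventually contain a common ball $B(p^*,r^*/2)$ in which $M$ is asymptotically dense, and I place sub-balls at distances $t_n\downarrow 0$ from $p^*$ with $s_n<t_n$; if $r^*=0$, I instead take $c_n\in B_n$ at distance comparable to the reach of $B_n$ from $p^*$ (which is $\geq r_n/4$ whether or not $p^*\in B_n$), again clustering geometrically. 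In all subcases the telescoping estimate $d(c_i,c_j)\geq d(c_i,p^*)-d(c_j,p^*)$ gives \eqref{eq:well separated}, and $\ep_n r_n\ll s_n$ preserves density.

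The delicate point is the bounded case in a non-proper space: without compactness I cannot assume the centers converge, which is exactly why I split off the $\delta$-separated alternative and invoke completeness only to produce the limit $p^*$ in the totally bounded case. The other subtlety, recurring everywhere, is to balance two competing scales — the radius $s_n$ must be large enough ($\gg\ep_n r_n$) to retain density yet small enough ($\ll d(c_n,x_0)$, resp. $\ll t_n$) to retain separation — but this window is always nonempty precisely because $\ep_n\to 0$.
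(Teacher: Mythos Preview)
Your argument is correct and reaches the same conclusion, but the organisation differs from the paper's in two notable ways. First, the paper's primary case split is on whether the sequence of \emph{radii} $(r_n)$ is bounded, after which it further subdivides according to the behaviour of the centres; you instead split on whether $\bigcup_n B_n$ is bounded, which has the effect of merging the paper's ``$(r_n)$ unbounded'' case and its ``$(p_n)$ unbounded'' subcase into your single unbounded branch. Second, and more substantively, the paper invokes Ramsey's theorem (twice) to force the intermediate balls to be pairwise disjoint \emph{before} arranging well-separation, and in the bounded-radius case it also performs a preliminary reduction to $r_n\to 0$ via the trick $\tilde r_n=r_n\sqrt{\ep_n}$. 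You bypass both devices: by always passing directly to sub-balls $B(c_n,s_n)$ with $s_n$ chosen in the window $\ep_n r_n\ll s_n\ll d(c_n,x_0)$, you obtain containment, disjointness and well-separation simultaneously at the end of each branch. The paper's route yields slightly more explicit constants and keeps the balls large for as long as possible; your route is conceptually leaner and makes the two competing scale constraints (density versus separation) the visible organising principle throughout. Both proofs use completeness in exactly the same place---to extract a convergent subsequence of centres once they are totally bounded---and both rely on the geodesic hypothesis to locate points at prescribed distances and to invoke Lemma~\ref{lm:geodesic lemma}.
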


We start by checking that, in a geodesic ambient space, we can always replace $(B_n)$ by uniformly smaller balls as follows.

\begin{lemma}
\label{lm:smaller balls}
Let $X$ be a geodesic metric space and $B_n=B(p_n,r_n)$, $n\in\NN$ be balls in $X$. Let $\lambda\in (0,1)$, and suppose that $q_n\in X$ are such that the ball $B'_n=B(q_n,\lambda r_n)$ is contained in $B_n$. If a subset $M\subset X$ is asymptotically dense in $(B_n)$, then it is also asymptotically dense in $(B'_n)$. 
\end{lemma}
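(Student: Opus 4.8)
The plan is to use the geodesic hypothesis to retreat slightly from each point of the smaller ball toward its center, thereby creating a buffer that forces the approximating point of $M$ to land inside $B'_n$ rather than merely inside $B_n$. Fix a sequence $\ep_n\to 0$ witnessing asymptotic density of $M$ in $(B_n)$, and restrict attention to indices $n$ large enough that both the $\ep_n r_n$-density condition holds and $\ep_n<\lambda$ (possible since $\ep_n\to 0$); the conclusion only needs to hold for large $n$ anyway.

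Given $x\in B'_n=B(q_n,\lambda r_n)$, I would pick a geodesic $\gamma\colon[0,L]\to X$ from $q_n$ to $x$, where $L=d(q_n,x)\leq\lambda r_n$ and $d(\gamma(s),\gamma(t))=\abs{s-t}$. Setting $t=\min\set{L,(\lambda-\ep_n)r_n}$ and $x'=\gamma(t)$ produces a point with $d(x',q_n)=t\leq(\lambda-\ep_n)r_n$, hence $x'\in B'_n\subset B_n$, and with $d(x,x')=L-t\leq\ep_n r_n$: indeed, if $t<L$ then $t=(\lambda-\ep_n)r_n$ and $L\leq\lambda r_n$, so $L-t\leq\ep_n r_n$, while if $t=L$ then $d(x,x')=0$. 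Applying asymptotic density in $(B_n)$ to the point $x'\in B_n$ yields some $y\in B_n\cap M$ with $d(x',y)\leq\ep_n r_n$, and now the buffer pays off:
$$
d(y,q_n)\leq d(y,x')+d(x',q_n)\leq\ep_n r_n+(\lambda-\ep_n)r_n=\lambda r_n ,
$$
so that in fact $y\in B'_n\cap M$. Moreover $d(x,y)\leq d(x,x')+d(x',y)\leq 2\ep_n r_n$. Thus $B'_n\cap M$ is $2\ep_n r_n$-dense in $B'_n$, i.e. $\ep'_n(\lambda r_n)$-dense with $\ep'_n=2\ep_n/\lambda\to 0$, which is exactly asymptotic density of $M$ in $(B'_n)$.

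The only genuine difficulty is the one the geodesic assumption removes: a point of $M$ close to $x$ is guaranteed to lie in $B_n$ but need not lie in the shrunken ball $B'_n$, so a naive application of the density hypothesis fails. The controlled inward retreat along a geodesic is what manufactures the room needed to keep the approximating point inside $B'_n$, and such a retreat is unavailable in a general (e.g. discrete) metric space. Everything past that point is a routine bookkeeping of triangle inequalities, with the fixed factor $2/\lambda$ harmlessly absorbed because $\ep_n\to 0$.
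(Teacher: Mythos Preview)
Your proof is correct and follows essentially the same route as the paper: retreat along a geodesic toward the center $q_n$ to create an $\ep_n r_n$-buffer, apply the density hypothesis at the retreated point, and check that the resulting point of $M$ lands in $B'_n$ with the constant $2\lambda^{-1}\ep_n$. The only cosmetic difference is that the paper isolates the geometric step as a standalone lemma (if $M\cap A$ is $\delta$-dense in $A$, then $M\cap B$ is $2\delta$-dense in any ball $B\subset A$ of radius $\geq\delta$), which it then reuses elsewhere; you carry out the same computation inline.
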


\begin{proof}
Fix $\lambda\in (0,1)$, and $\ep_n\to 0$ such that $B_n\cap M$ is eventually $\ep_nr_n$-dense in $B_n$. By Lemma \ref{lm:geodesic lemma}, $B'_n\cap M$ is $2\ep_nr_n$-dense in $B'_n$ whenever $\ep_n\leq\lambda$, which holds for $n$ large enough. Since $B'_n$ has radius $\lambda r_n$ and $2\ep_nr_n=(2\lambda^{-1}\ep_n)\cdot\lambda r_n$, the condition for asymptotic density is satisfied with constants $2\lambda^{-1}\ep_n$ in place of $\ep_n$.
\end{proof}

\begin{proof}[Proof of Proposition \ref{pr:well separated balls}]
Since $M$ is not porous, by Lemma \ref{lm:porous balls} there exist balls $B_n=B(p_n,r_n)$ in $X$ and $\ep_n\in (0,1)$ such that $\ep_n\to 0$ and $B_n\cap M$ is $\ep_nr_n$-dense in $B_n$. We will use them as a starting point to construct the desired sequence of balls in $X$. Recall that the asymptotic density of $M$ is preserved if we pass to a subsequence, or reduce the radius of all balls by a constant factor (Lemma \ref{lm:smaller balls}), so we will do so frequently without further justification. Throughout the proof, we will use the notation
$$
\rad(A,x_0) = \sup\set{d(x,x_0) \,:\, x\in A}
$$
for the radius of the smallest ball centered at $x_0$ that contains the set $A\subset X$.

For our construction we consider two cases, depending on whether the sequence of radii $(r_n)$ is bounded or unbounded, and treat them separately.

\medskip

\textbf{Case 1:} $(r_n)$ is bounded. Note first that we may assume $r_n\to 0$. Indeed, consider the balls $\widetilde{B}_n=B(p_n,\tilde{r}_n)$ with $\tilde{r}_n=r_n\sqrt{\ep_n}\to 0$. We have $\tilde{r}_n\geq r_n\ep_n$, hence Lemma \ref{lm:geodesic lemma} shows that $\widetilde{B}_n\cap M$ is $\tilde{\ep}_n\tilde{r}_n$-dense in $\widetilde{B}_n$ where $\tilde{\ep}_n=2\sqrt{\ep_n}\to 0$. Thus, after passing to a subsequence to ensure that $\tilde{\ep}_n<1$ for all $n$, we may start with the balls $\widetilde{B}_n$ instead of $B_n$.

Next, we show that the $B_n$ can moreover be chosen to be pairwise disjoint. Indeed, by passing to a subsequence we assume that $r_1<\frac{1}{2}\diam(X)$ and that $r_{n+1}<\frac{1}{8}r_n$ for all $n$. By Ramsey's theorem, we may choose a further subsequence such that the balls $B'_n=B(p_n,\frac{1}{2}r_n)$ satisfy either $B'_m\cap B'_n=\varnothing$ for all $n\neq m$ or $B'_m\cap B'_n\neq\varnothing$ for all $n\neq m$. If the former holds, then the balls $B'_n$ are the ones we are seeking, so assume the latter. Then we have $B_{n+1}\subset B_n$ for all $n$: indeed, if $x\in B_{n+1}$ then fixing some $y\in B'_n\cap B'_{n+1}$ yields
$$
d(x,p_n) \leq d(x,p_{n+1})+d(p_{n+1},y)+d(y,p_n) \leq r_{n+1}+\tfrac{1}{2}r_{n+1}+\tfrac{1}{2}r_n < r_n .
$$
Now we construct a new ball $B''_n=B(q_n,\frac{1}{8}r_n)$ for each $n$. The center $q_n$ is chosen depending of two cases:
\begin{itemize}
\item If $d(p_n,p_{n+1})\geq \frac{1}{4}r_n$, let $q_n=p_n$.
\item Otherwise, let $q_n$ be any point in $X$ such that $d(p_n,q_n)=\frac{3}{4}r_n$. Note that such a point must exist: since $\diam(X)>2r_n$, there exists $z\in X\setminus B(p_n,r_n)$, and any geodesic joining $p_n$ and $z$ must contain a valid choice for $q_n$.
\end{itemize}
Note that both alternatives yield $B''_n\subset B_n\setminus B_{n+1}$ because $r_{n+1}<\frac{1}{8}r_n$. Thus the balls $B''_n$ are pairwise disjoint and, since $B''_n\subset B_n$, $M$ is asymptotically dense in $(B''_n)$ by Lemma \ref{lm:smaller balls}.

At this point, we have managed to force our sequence $B_n$ to be pairwise disjoint and satisfy $r_n\to 0$. To finish the construction, we consider three subcases.

\begin{itemize}

\item Suppose first that the sequence of points $(p_n)$ is unbounded. Fix any $x_0\in X$ and pass to a subsequence such that $d(p_n,x_0)\to\infty$ and no $B_n$ contains $x_0$. Then pass to a further subsequence such that $d(B_n,x_0)\geq 2\rad(B_k,x_0)$ for all $k<n$ (this is possible because $d(B_n,x_0)\geq d(p_n,x_0)-r_n\to\infty$). Then, given $x\in B_n,y\in B_k$ with $n>k$, we have
\begin{align*}
d(x,y) &\geq d(x,x_0)-d(y,x_0) \geq d(x,x_0)-\rad(B_k,x_0) \\
&\geq d(x,x_0)-\tfrac{1}{2}d(B_n,x_0) \geq \tfrac{1}{2}d(x,x_0) \geq \tfrac{1}{4}\pare{d(x,x_0)+d(y,x_0)}
\end{align*}
so $(B_n)$ are well-separated with constant $\lambda=\frac{1}{4}$.

\item Suppose that the sequence $(p_n)$ has an accumulation point $x_0$. Then we pass to a subsequence so that $p_n\to x_0$ and no $B_n$ contains $x_0$. Pass to a further subsequence so that $\rad(B_n,x_0)\leq\frac{1}{2}d(B_k,x_0)$ for all $k<n$ (this is possible because $\rad(B_n,x_0)\leq d(p_n,x_0)+r_n\to 0$). Then, given $x\in B_n,y\in B_k$ with $n>k$, we have
\begin{align*}
d(x,y) &\geq d(y,x_0)-d(x,x_0) \geq d(y,x_0)-\rad(B_n,x_0) \\
&\geq d(y,x_0)-\tfrac{1}{2}d(B_k,x_0) \geq \tfrac{1}{2}d(y,x_0) \geq \tfrac{1}{4}\pare{d(x,x_0)+d(y,x_0)}
\end{align*}
so $(B_n)$ are well-separated with constant $\lambda=\frac{1}{4}$.

\item If neither of the above holds then, since $X$ is complete, the set $\set{p_n:n\in\NN}$ cannot be totally bounded so we may pass to a subsequence $(p_n)$ that is bounded and uniformly discrete, i.e. there exist $R>\theta>0$ such that $\theta\leq d(p_n,p_m)\leq R$ for all $n\neq m$. Take $x_0=p_1$ and pass to a further subsequence so that $r_n\leq\frac{\theta}{4}$ for all $n$. Then, for $x\in B_n,y\in B_m$ with $n\neq m$,
$$
d(x,y) \geq d(p_n,p_m)-r_n-r_m \geq \frac{\theta}{2} 
$$
and
$$
d(x,x_0)+d(y,x_0) \leq d(x_0,p_n)+r_n+d(x_0,p_m)+r_m \leq \frac{\theta}{2}+2R
$$
so $(B_n)$ are well-separated with constant $\lambda=\frac{\theta}{\theta+4R}$.

\end{itemize}

This completes the proof of Case 1.

\medskip

\textbf{Case 2:} $(r_n)$ is unbounded. By our definition of porosity, we may assume that $r_n\leq\diam(X)$ for all $n$ and hence $X$ has infinite diameter.

We start by showing that the $(B_n)$ can be chosen to be pairwise disjoint in addition to $r_n\to\infty$. The argument is dual to that of Case 1. First, pass to a subsequence so that $r_{n+1}>8r_n$. Ramsey's theorem again yields a subsequence such that the balls $B'_n=B(p_n,\frac{1}{2}r_n)$ are either pairwise disjoint or intersect pairwise. In the former case, $(B'_n)$ is the desired sequence. Otherwise, we obtain $B_n\subset B_{n+1}$ for all $n$ as, given any $x\in B_n$ and $y\in B'_n\cap B'_{n+1}$, we have
$$
d(x,p_{n+1}) \leq d(x,p_n)+d(p_n,y)+d(y,p_{n+1}) \leq r_n+\tfrac{1}{2}r_n+\tfrac{1}{2}r_{n+1} < r_{n+1} .
$$
Next, for any $n\geq 2$, we let $B''_n=B(q_n,\frac{1}{8}r_n)$ where $q_n$ is chosen as follows:
\begin{itemize}
\item If $d(p_n,p_{n-1})\geq \frac{1}{4}r_n$, then we take $q_n=p_n$.
\item Otherwise, pick $q_n\in X$ such that $d(p_n,q_n)=\tfrac{3}{4}r_n$; note that such $q_n$ must exist on any geodesic joining $p_n$ and a sufficiently distant point.
\end{itemize}
Then we have $B''_n\subset B_n\setminus B_{n-1}$ in both alternatives, so the balls $B''_n$, $n\geq 2$ are pairwise disjoint and, since $B''_n\subset B_n$, $M$ is asymptotically dense in $(B''_n)$ by Lemma \ref{lm:smaller balls}.

So, assume that $r_n\to\infty$ and the $(B_n)$ are pairwise disjoint. To finish the construction, fix any $x_0\in X$ and pass to a subsequence such that either $d(B_n,x_0)\geq r_n$ for all $n$ or $d(B_n,x_0)\leq r_n$ for all $n$. We treat both cases separately.

\begin{itemize}

\item Suppose $d(B_n,x_0)\geq r_n$ for all $n$. Since $r_n\to\infty$, we may pass to a subsequence such that $r_n\geq 3\rad(B_k,x_0)$ whenever $n>k$. Then, for $x\in B_n,y\in B_k$ we have $d(x,x_0)\geq r_n\geq 3d(y,x_0)$ and therefore
$$
d(x,y) \geq d(x,x_0) - d(y,x_0) \geq \tfrac{1}{2}\pare{d(x,x_0) + d(y,x_0)}
$$
so the $(B_n)$ are well-separated with constant $\lambda=\frac{1}{2}$.

\item Suppose $d(B_n,x_0)\leq r_n$ for all $n$. Then we replace the balls $B_n$ with $B'_n=B(p_n,\frac{1}{2}r_n)$. For $x\in B'_n,y\in B'_m$ with $n\neq m$, we have
$$
d(x,x_0) + d(y,x_0) \leq d(B'_n,x_0)+r_n + d(B'_m,x_0)+r_m \leq 3(r_n+r_m) .
$$
On the other hand, since $X$ is geodesic, $B_n\cap B_m=\varnothing$ implies $d(p_n,p_m)>r_n+r_m$ and therefore
$$
d(x,y) \geq d(p_n,p_m) - d(x,p_n) - d(y,p_m) > \tfrac{1}{2}(r_n+r_m) .
$$
Hence the $(B'_n)$ are well-separated with constant $\lambda=\frac{1}{6}$.

\end{itemize}

This completes the construction in Case 2 and finishes the proof.
\end{proof}

\section{Proof of the main result}
\label{sec:main}

We will now see how the properties of the balls $B_n$ obtained in Proposition \ref{pr:well separated balls} allow us to infer several complementation results that will lead to the proof of our main result. Some of them will follow from technical results that are well-known to experts in the topic and easy to establish.
We collect these necessary lemmas here; constructive proofs of Lemmas \ref{lm:well separated free} and \ref{lm:limit inclusion} are provided in an Appendix.

On one hand, the balls $B_n$ are well-separated. Our main reason for considering that property is the following known result about $\ell_1$-sums of Lipschitz-free spaces. It follows from a simple computation that can be found in equivalent form in \cite[Proposition 2]{HajekNovotny}, \cite[Lemma 2.1]{AACD2}, or Proposition 5.1 in the preprint release of \cite{Kaufmann} (but not in the journal version). An early form of the statement can also be found in \cite[Proposition 5.1]{Godard}.
We refer to either of those references for the proof.

\begin{lemma}\label{lm:well separated free}
Let $M$ be a metric space and $(M_n)$ be a sequence of subsets of $M$ that are well-separated with respect to the point $x_0\in M$. Then
$$
\mathcal{F}\pare{\bigcup_{n=1}^\infty M_n\cup\set{x_0}} \sim \pare{\bigoplus_{n=1}^\infty\lipfree{M_n\cup\set{x_0}}}_1 .
$$
\end{lemma}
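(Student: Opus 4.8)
The plan is to exhibit an explicit isomorphism by linearizing a suitable Lipschitz map, which sidesteps any weak$^*$ considerations. Take $x_0$ as the base point throughout, write $N_n=M_n\cup\set{x_0}$ and $N=\bigcup_n M_n\cup\set{x_0}$, and set $Y=\pare{\bigoplus_n\lipfree{N_n}}_1$. Let $\lambda>0$ be the well-separation constant from \eqref{eq:well separated}; replacing it by $\min\set{\lambda,1}$ we may assume $\lambda\le 1$. Since each $N_n$ contains the base point, McShane extension identifies $\lipfree{N_n}$ isometrically with a subspace of $\lipfree{N}$ via an inclusion $\iota_n$ fixing the generators $\delta(x)$, $x\in N_n$.

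First I would define the summation operator $J:Y\to\lipfree{N}$ by $J((\gamma_n)_n)=\sum_n\iota_n(\gamma_n)$. Since each $\iota_n$ is an isometry and the domain carries the $\ell_1$-norm, $J$ is well-defined and linear with $\norm{J}\le 1$. For the reverse direction, I would define a map $\Phi:N\to Y$ sending $x\in M_n$ to the element $\delta(x)\in\lipfree{N_n}$ placed in the $n$-th coordinate of $Y$, and $\Phi(x_0)=0$; this is unambiguous because distinct $M_n$ meet only at $x_0$, where every coordinate vanishes. The crucial point is to bound $\Lip(\Phi)$. For $x,y$ in the same $N_n$, the difference $\Phi(x)-\Phi(y)$ lives in the single coordinate $\lipfree{N_n}$, so $\norm{\Phi(x)-\Phi(y)}_Y=\norm{\delta(x)-\delta(y)}_{\lipfree{N_n}}=d(x,y)$. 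For $x\in M_n$, $y\in M_m$ with $n\ne m$, the images lie in different coordinates, so by the $\ell_1$-structure and the isometry $\norm{\delta(x)}_{\lipfree{N_n}}=d(x,x_0)$ we obtain $\norm{\Phi(x)-\Phi(y)}_Y=d(x,x_0)+d(y,x_0)\le\lambda^{-1}d(x,y)$, where the last inequality is precisely the well-separation condition \eqref{eq:well separated}. Hence $\Lip(\Phi)\le\lambda^{-1}$, and the universal property of $\lipfree{N}$ linearizes $\Phi$ to a bounded operator $P:\lipfree{N}\to Y$ with $\norm{P}\le\lambda^{-1}$ and $P(\delta(x))=\Phi(x)$ for all $x\in N$.

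It then remains to check that $J$ and $P$ are mutually inverse, which it suffices to verify on the generators since these span dense subspaces. For $x\in M_n$, the generator $\delta(x)$ of the $n$-th coordinate of $Y$ satisfies $J(\delta(x))=\delta(x)\in\lipfree{N}$ and then $P(\delta(x))=\Phi(x)=\delta(x)$ back in the $n$-th coordinate, so $PJ=\mathrm{id}_Y$; conversely $JP(\delta(x))=J(\Phi(x))=\delta(x)$ for every $x\in N$, so $JP=\mathrm{id}_{\lipfree{N}}$. Therefore $J$ is an isomorphism with $\norm{J}\le 1$ and $\norm{J^{-1}}=\norm{P}\le\lambda^{-1}$, which proves the claim. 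The only genuine obstacle is the cross-component estimate on $\Lip(\Phi)$, but it is handled exactly by the defining inequality of well-separation; the remaining verifications reduce to the isometric embedding $\lipfree{N_n}\hookrightarrow\lipfree{N}$ and to checking the two identities on generators.
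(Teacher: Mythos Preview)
Your proof is correct. The isomorphism you construct is the same one the paper builds in the Appendix, but you reach it from the predual side rather than the dual side: the paper defines the restriction map $R:\Lip_0(N)\to\bigl(\bigoplus_n\Lip_0(N_n)\bigr)_\infty$ and its inverse $T$ (the piecing-together map), checks both are bounded, and then invokes Banach--Dieudonn\'e to verify weak$^*$-to-weak$^*$ continuity so that they drop to the preduals. Your operators $J$ and $P$ are precisely the pre-adjoints of $R$ and $T$, obtained directly by linearizing the Lipschitz map $\Phi$ through the universal property of $\lipfree{N}$. The cross-component estimate on $\Lip(\Phi)$ is exactly the dual computation to the paper's bound $\lipnorm{T((f_n))}\le\lambda^{-1}$. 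Your route is arguably cleaner in that it dispenses with the weak$^*$ check altogether and makes the role of the universal property explicit; the paper's route has the minor advantage of identifying the dual isomorphism simultaneously. Either way, the substance is identical.
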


In order to remove the point $x_0$ from the conclusion of Lemma \ref{lm:well separated free}, we will make use of the well-known relation between the Lipschitz-free spaces over a metric space and the same metric space with one point removed.

\begin{lemma}[{\cite[Lemma 2.8]{AACD3}}]
\label{lm:removing one point}
There exists a universal constant $C<\infty$ such that, for every metric space $M$ and every $x_0\in M$, $\lipfree{M}$ is $C$-isomorphic to $\lipfree{M\setminus\set{x_0}}\oplus_1\RR$. If $M$ is infinite, then $\lipfree{M}$ is also $C$-isomorphic to $\lipfree{M\setminus\set{x_0}}$.
\end{lemma}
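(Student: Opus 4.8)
The plan is to split the removal of $x_0$ according to whether $x_0$ is isolated in $M$, and to isolate the single nontrivial ingredient---an absorption property for free spaces over infinite metric spaces---which then glues both assertions together with one universal constant. Throughout I fix a base point $0\in M\setminus\set{x_0}$ (the interesting case being $\abs{M}\geq 2$), so that $\lipfree{M\setminus\set{x_0}}=\cl{\lspan}\set{\delta(x):x\in M\setminus\set{x_0}}$ is an isometric subspace of $\lipfree{M}$. I would first dispose of the case where $x_0$ is isolated, say $\rho:=\dist(x_0,M\setminus\set{x_0})>0$. Choosing $y_0\in M\setminus\set{x_0}$ with $d(x_0,y_0)\leq 2\rho$ and defining the retraction $r:M\to M\setminus\set{x_0}$ by $r(x_0)=y_0$ and $r(x)=x$ otherwise, a one-line triangle-inequality estimate gives $\Lip(r)\leq 3$, since $d(y_0,y)\leq d(y_0,x_0)+d(x_0,y)\leq 2\rho+d(x_0,y)\leq 3d(x_0,y)$ for $y\neq x_0$. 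As $r$ fixes the base point and restricts to the identity on $M\setminus\set{x_0}$, its linearization $P=\widehat r$ is a projection of $\lipfree{M}$ onto $\lipfree{M\setminus\set{x_0}}$ with $\norm{P}\leq 3$, whose kernel is the line $\RR(\delta(x_0)-\delta(y_0))$; this line is nonzero because the $1$-Lipschitz function equal to $\rho$ at $x_0$ and to $0$ on $M\setminus\set{x_0}$ separates $\delta(x_0)$ from $\lipfree{M\setminus\set{x_0}}$. Since a direct sum with a one-dimensional summand is always $(1+2\norm{P})$-equivalent to the corresponding $\ell_1$-sum, this proves the first assertion with $C\leq 7$ in the isolated case.

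Next I would observe that when $x_0$ is \emph{not} isolated the first assertion is no longer about a genuine complement: then $M\setminus\set{x_0}$ is dense in $M$, so Lipschitz functions extend uniquely to the closure and $\lipfree{M\setminus\set{x_0}}\equiv\lipfree{M}$ isometrically. Hence in this case the first assertion reduces to $\lipfree{M}\sim\lipfree{M}\oplus_1\RR$, that is, to the claim that $\lipfree{N}$ is uniformly isomorphic to its hyperplane for the infinite space $N=M\setminus\set{x_0}$ (a non-isolated point forces $N$ to be infinite). The very same property also promotes the first assertion to the second: for infinite $M$ the set $N=M\setminus\set{x_0}$ is infinite, so $\lipfree{M}\sim\lipfree{N}\oplus_1\RR\sim\lipfree{N}$. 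Everything therefore reduces to the single \emph{absorption} statement that $\lipfree{N}\sim\lipfree{N}\oplus_1\RR$, with a constant independent of $N$, for every infinite metric space $N$.

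To establish this absorption I would exhibit a complemented copy of $\ell_1$ in $\lipfree{N}$ with universal constant. Exactly as in the three-subcase extraction carried out in the proof of Proposition \ref{pr:well separated balls} (unbounded; possessing an accumulation point; or bounded and uniformly discrete), every infinite $N$ contains an infinite sequence $\set{a_n}$ that is well-separated with respect to some point $x_0'$. Applying Lemma \ref{lm:well separated free} to the singletons $M_n=\set{a_n}$, and using that the free space of a two-point space is $\RR$, one gets $\lipfree{\set{a_n}\cup\set{x_0'}}\sim\pare{\bigoplus_n\RR}_1=\ell_1$ as a subspace of $\lipfree{N}$. If this copy is complemented in $\lipfree{N}$ by a projection of universally bounded norm, then writing $\lipfree{N}\sim\ell_1\oplus Z$ and invoking the isometry $\ell_1\equiv\RR\oplus_1\ell_1$ yields $\lipfree{N}\oplus_1\RR\sim\ell_1\oplus Z\oplus_1\RR\sim\ell_1\oplus Z\sim\lipfree{N}$, which is the engine; assembling the three cases then gives a single universal $C$.

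The hard part will be the \emph{uniform complementation} in the last step: one needs a Lipschitz retraction of $N$ onto $\set{x_0'}\cup\set{a_n}$ whose constant is bounded independently of $N$, so that the projection---and hence the final isomorphism constant $C$---is universal. Producing such a retraction (sending each point either to $x_0'$ or to a suitable $a_n$, and checking that the estimate survives the separation of the $\set{a_n}$), while simultaneously keeping the extraction of $\set{a_n}$ uniform across the bounded/unbounded and discrete/convergent dichotomies, is where the genuine difficulty lies; by contrast the isolated case, the density identity, and the Pe\l czy\'nski-type cancellation built on $\ell_1\equiv\RR\oplus_1\ell_1$ are all routine.
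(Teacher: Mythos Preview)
The paper does not supply its own proof of this lemma; it is quoted verbatim from \cite[Lemma 2.8]{AACD3} and used as a black box. So there is no in-paper proof to compare against, only the cited source.

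Your outline is the standard one, and it matches the strategy in \cite{AACD3} (building on \cite{CDW}): dispose of the isolated case by a nearest-point retraction with $\Lip(r)\leq 3$; in the non-isolated case use density to identify $\lipfree{M\setminus\set{x_0}}\equiv\lipfree{M}$; then reduce both assertions to the single absorption statement $\lipfree{N}\sim\lipfree{N}\oplus_1\RR$ for infinite $N$, established by finding a uniformly complemented copy of $\ell_1$ in $\lipfree{N}$ and invoking $\ell_1\equiv\RR\oplus_1\ell_1$. All of your reductions are correct, and the constant bookkeeping in the isolated case is fine.

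The proposal is, however, explicitly incomplete at the point you yourself flag as ``the hard part'': you need a Lipschitz retraction of $N$ onto $\set{x_0'}\cup\set{a_n}$ with constant independent of $N$. Merely extracting a well-separated sequence (as in the three subcases of Proposition~\ref{pr:well separated balls}) does not by itself yield such a retraction; one must choose the $a_n$ more carefully so that the distances $d(a_n,x_0')$ grow (or decay) at least geometrically, and then verify that the map sending $x$ to the $a_n$ with $d(a_n,x_0')$ nearest to $d(x,x_0')$ (or to $x_0'$) is Lipschitz with a universal bound. This is exactly what \cite{AACD3} and \cite{CDW} do, and it is not difficult once set up correctly, but your proposal stops before writing it down. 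As it stands, the proposal is a correct and well-organized plan rather than a complete proof.
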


We shall apply Lemmas \ref{lm:well separated free} and \ref{lm:removing one point} in the following joint form.

\begin{lemma}\label{lm:well separated free 2}
Let $M$ be a metric space and $(M_n)$ be an infinite sequence of non-empty, pairwise disjoint, well-separated subsets of $M$. Then
$$
\mathcal{F}\pare{\bigcup_{n=1}^\infty M_n} \sim \pare{\bigoplus_{n=1}^\infty\lipfree{M_n}}_1 \oplus_1 \ell_1.
$$
\end{lemma}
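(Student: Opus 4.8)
The plan is to deduce the identity from Lemma~\ref{lm:well separated free} by inserting the separation point $x_0$ and then deleting it again with the help of Lemma~\ref{lm:removing one point}. Let $x_0\in M$ be a point with respect to which $(M_n)$ is well-separated, and set $N=\bigcup_{n=1}^\infty M_n$. Since the $M_n$ are non-empty and pairwise disjoint, choosing one point from each shows that $N$, and hence $N\cup\set{x_0}$, is infinite. Applying Lemma~\ref{lm:well separated free} to $(M_n)$ with respect to $x_0$ gives
$$
\lipfree{N\cup\set{x_0}} \sim \pare{\bigoplus_{n=1}^\infty\lipfree{M_n\cup\set{x_0}}}_1 .
$$

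First I would identify the left-hand side with $\lipfree{N}$. If $x_0\in N$ then $N\cup\set{x_0}=N$ and there is nothing to do; otherwise $x_0\notin N$, and since $N\cup\set{x_0}$ is infinite, the second assertion of Lemma~\ref{lm:removing one point} yields $\lipfree{N\cup\set{x_0}}\sim\lipfree{N}$. In either case $\lipfree{N\cup\set{x_0}}\sim\lipfree{\bigcup_n M_n}$, the space we wish to describe.

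Next I would simplify each summand on the right. As the $M_n$ are pairwise disjoint, $x_0$ lies in at most one of them, say $M_{n_0}$ (possibly none). For every index $n\neq n_0$ we have $x_0\notin M_n$, so the first assertion of Lemma~\ref{lm:removing one point} provides an isomorphism $\lipfree{M_n\cup\set{x_0}}\sim\lipfree{M_n}\oplus_1\RR$ whose constant is controlled by the universal $C$; for $n=n_0$ one has $M_{n_0}\cup\set{x_0}=M_{n_0}$, so no correction is needed there. Because these constants are uniform in $n$ (after rescaling each map so that it and its inverse have norm at most $\sqrt{C}$), the $\ell_1$-sum of these isomorphisms is again an isomorphism. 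Splitting off the one-dimensional factors by associativity of $\ell_1$-sums, the countably many $\RR$-summands combine into a copy of $\ell_1$ while the exceptional term $\lipfree{M_{n_0}}$ merges harmlessly into the first block, giving
$$
\pare{\bigoplus_n\lipfree{M_n\cup\set{x_0}}}_1 \sim \pare{\bigoplus_n\lipfree{M_n}}_1 \oplus_1 \ell_1 .
$$
Concatenating this with the identification of the left-hand side proves the lemma.

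Everything here is routine manipulation of $\ell_1$-sums, and I foresee no genuine obstacle. The only points demanding attention are the bookkeeping for $x_0$ — whether or not it belongs to $N$, and whether or not it belongs to one of the $M_n$ — and the verification that the per-summand isomorphisms of Lemma~\ref{lm:removing one point} assemble into a single isomorphism of the $\ell_1$-sums, which holds precisely because their constants are bounded uniformly by $C$.
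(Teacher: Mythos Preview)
Your proposal is correct and follows essentially the same approach as the paper: apply Lemma~\ref{lm:well separated free}, use Lemma~\ref{lm:removing one point} to strip the extra $\RR$-summand from each $\lipfree{M_n\cup\set{x_0}}$ (for all but at most one $n$), and then use it again to remove $x_0$ from $\lipfree{N\cup\set{x_0}}$. Your write-up is more explicit about the case analysis for $x_0$ and the uniformity of the isomorphism constants, but the underlying argument is identical.
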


If some $M_{n_0}$ is infinite, then $\ell_1\compl\lipfree{M_{n_0}}$ (by e.g. \cite[Theorem 1.1]{CDW}) and we get simply $\lipfree{\bigcup_n M_n}\sim\pare{\bigoplus_n\lipfree{M_n}}_1$, but our statement is also valid when all $M_n$ are finite.

\begin{proof}
Suppose the sets $(M_n)$ are well-separated with respect to $x_0\in M$. By Lemma \ref{lm:removing one point}, $\lipfree{M_n\cup\set{x_0}}$ is $C$-isomorphic to $\lipfree{M_n}\oplus_1\RR$ whenever $x_0\notin M_n$, which is the case for all $n$ except one at most. Thus Lemma \ref{lm:well separated free} implies
$$
\mathcal{F}\pare{\bigcup_{n=1}^\infty M_n\cup\set{x_0}} \sim \pare{\bigoplus_{n=1}^\infty\lipfree{M_n\cup\set{x_0}}}_1 \sim \pare{\bigoplus_{n=1}^\infty\lipfree{M_n}}_1 \oplus_1 \ell_1 .
$$
Since $\bigcup_n M_n$ is infinite, the left-hand side is isomorphic to $\lipfree{\bigcup_n M_n}$ by Lemma \ref{lm:removing one point} regardless of whether $x_0\in\bigcup_n M_n$ or not.
\end{proof}

On the other hand, the balls $B_n$ obtained in Proposition \ref{pr:well separated balls} are such that $M$ is asymptotically dense in them. If the ambient space $X$ is a Banach space, then we may rescale the $B_n$ so that they all have the same size as the unit ball $B_X$; doing so does not change their Lipschitz spaces because these are invariant under dilations. Asymptotic density then amounts to the rescalings of $B_n\cap M$ being increasingly dense subsets of $B_X$. The next lemma details the relation between the resulting Lipschitz spaces. It is a particular case of \cite[Lemma 1.3]{CCD} and similar to \cite[Theorem 3.1]{GLG}.
We refer to \cite{CCD} for the proof.

\begin{lemma}\label{lm:limit inclusion}
Let $M$ be a metric space, and let $(M_n)$ be a sequence of closed subsets of $M$ with the following property: for every $x\in M$ there exist $x_n\in M_n$ such that $x_n\to x$. Then $\Lip_0(M)$ is linearly isometric to a 1-complemented subspace of $\pare{\bigoplus_n\Lip_0(M_n)}_\infty$.
\end{lemma}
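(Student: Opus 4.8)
The plan is to build the isometry into the $\ell_\infty$-sum in the obvious coordinatewise way, using McShane extensions to move functions back and forth between $M$ and its subsets $M_n$, and then to exhibit the complementation via a weak$^*$-limit projection. Concretely, define $T:\Lip_0(M)\to\pare{\bigoplus_n\Lip_0(M_n)}_\infty$ by $Tf=(f\restrict M_n)_n$, where we take the base point of each $M_n$ to be the common base point $0$ of $M$ (after noting $0$ can be assumed to lie in every $M_n$, or handled by the density hypothesis applied at $x=0$). Since restriction never increases the Lipschitz constant, $\norm{f\restrict M_n}_L\leq\lipnorm{f}$ for every $n$, so $\norm{Tf}_\infty\leq\lipnorm{f}$ and $T$ is a well-defined contraction.

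The first substantive step is to check that $T$ is in fact an isometry, i.e. that the supremum over $n$ of $\norm{f\restrict M_n}_L$ recovers $\lipnorm{f}$. This is where the density hypothesis enters: given $x\neq y$ in $M$, the slope $\frac{f(x)-f(y)}{d(x,y)}$ can be approximated arbitrarily well by slopes of $f$ along pairs of points in some $M_n$. Indeed, pick $x_n,y_n\in M_n$ with $x_n\to x$ and $y_n\to y$; then $d(x_n,y_n)\to d(x,y)>0$ and, by continuity of $f$, $f(x_n)-f(y_n)\to f(x)-f(y)$, so $\frac{f(x_n)-f(y_n)}{d(x_n,y_n)}$ converges to the original slope. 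Hence $\sup_n\norm{f\restrict M_n}_L\geq\lipnorm{f}$, giving equality. (Closedness of the $M_n$ is not strictly needed for this direction, but it is convenient for the projection below.)

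The main work is constructing a norm-one projection from $\pare{\bigoplus_n\Lip_0(M_n)}_\infty$ onto the range of $T$. The natural idea is to send a bounded sequence $(g_n)$, with $g_n\in\Lip_0(M_n)$ and $\sup_n\norm{g_n}_L\leq1$, to a single function on $M$ obtained as a pointwise limit: for $x\in M$, choose $x_n\in M_n$ with $x_n\to x$ and set $(Pg)(x)=\lim_n g_n(x_n)$, along a suitable (ultrafilter) subnet to guarantee existence. One checks that the resulting function is $1$-Lipschitz on $M$, vanishes at the base point, and is independent of the chosen approximating sequence (any two choices differ by a null sequence because the $g_n$ are uniformly Lipschitz and $d(x_n,x_n')\to0$), so $P$ is well defined, linear, and contractive; and $P\circ T=\mathrm{id}_{\Lip_0(M)}$ since $Tf$ is approximated by $f$ itself. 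The cleanest way to make the limit canonical is to fix a free ultrafilter $\mathcal{U}$ on $\NN$ and define $(Pg)(x)=\lim_{n\to\mathcal{U}}g_n(x_n)$; uniform equicontinuity of the $g_n$ then makes the value depend only on $x$ and not on the sequence $(x_n)$.

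The step I expect to be the main obstacle is verifying that $P$ is genuinely well defined, i.e. that the ultralimit does not depend on the choice of approximating points $x_n\to x$, and simultaneously that $TP\circ P = P$ so that $Q=TP$ is an idempotent. The delicate point is that two sequences $x_n,x_n'\to x$ with $x_n,x_n'\in M_n$ satisfy $d(x_n,x_n')\to0$, whence $\abs{g_n(x_n)-g_n(x_n')}\leq\norm{g_n}_L\,d(x_n,x_n')\to0$, so the two ultralimits agree; this uniform-Lipschitz estimate is exactly what the $\ell_\infty$-sum structure supplies. Granting well-definedness, $P$ is a contraction with $PT=\mathrm{id}$, so $Q=TP$ is a norm-one projection onto $T(\Lip_0(M))$, identifying $\Lip_0(M)$ with a $1$-complemented subspace of $\pare{\bigoplus_n\Lip_0(M_n)}_\infty$, as claimed.
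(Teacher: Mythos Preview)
Your approach is essentially the same as the paper's: embed $\Lip_0(M)$ via coordinatewise restriction, verify isometry by approximating difference quotients with pairs $(x_n,y_n)\in M_n\times M_n$, and build the left inverse as an ultrafilter pointwise limit. The only cosmetic differences are that the paper handles the base point more carefully (it picks $0_n\in M_n$ with $0_n\to 0$ and sets $R(f)_n=f\restrict_{M_n}-f(0_n)$ rather than assuming $0\in M_n$, which is not granted by the hypothesis) and that the paper phrases the projection via McShane extensions $F_n$ of $g_n$ and $\lim_{\mathcal U}F_n(x)$, then immediately observes this equals $\lim_{\mathcal U}g_n(x_n)$---your direct formulation---because $|F_n(x)-g_n(x_n)|\le d(x,x_n)\to 0$.
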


Unlike the previous lemmas, Lemma \ref{lm:limit inclusion} deals exclusively with Lipschitz spaces and its conclusion cannot possibly pass to the corresponding preduals. Indeed, if $M_n$ are increasingly dense nets in $M$, then all $\lipfree{M_n}$ have the Radon-Nikod\'ym property \cite[Proposition 4.4]{Kalton}, so $\lipfree{M}$ cannot embed into $\pare{\bigoplus_n\lipfree{M_n}}_1$ if it fails the property, e.g. if $M$ is the unit ball of a Banach space.

\subsection{The Banach space case}
\label{sec:banach}

The next proposition brings all previous results and remarks together.

\begin{proposition}\label{pr:banach inclusion}
Let $X$ be a Banach space and let $M\subset X$ be non-porous. Then there exists a subset $N\subset M$, which is also not porous in $X$, such that $\Lip_0(N)$ contains a complemented copy of $\Lip_0(X)$.
\end{proposition}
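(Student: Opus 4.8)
The plan is to assemble the preceding lemmas around the sequence of balls furnished by Proposition \ref{pr:well separated balls}. Since a Banach space is complete and geodesic, that proposition applies to the non-porous set $M$: it yields pairwise disjoint, well-separated closed balls $B_n=B(p_n,r_n)$ in which $M$ is asymptotically dense, say $B_n\cap M$ is $\ep_nr_n$-dense in $B_n$ with $\ep_n\to 0$. I would set $M_n=M\cap B_n$ and take $N=\bigcup_n M_n=M\cap\bigcup_n B_n$. Discarding the finitely many indices where asymptotic density fails, the $M_n$ are non-empty, and they are pairwise disjoint and well-separated because the $B_n$ are. Since $N$ is still asymptotically dense in $(B_n)$, it is non-porous, which produces the required $N\subset M$.

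Next I would exploit well-separation on the predual side. By Lemma \ref{lm:well separated free 2},
$$
\lipfree{N} \sim \pare{\bigoplus_n \lipfree{M_n}}_1 \oplus_1 \ell_1 .
$$
Taking adjoints—using that $\lipfree{A}^*\equiv\Lip_0(A)$ and that the dual of an $\ell_1$-sum is the $\ell_\infty$-sum of the duals—gives
$$
\Lip_0(N) \sim \pare{\bigoplus_n \Lip_0(M_n)}_\infty \oplus_\infty \ell_\infty ,
$$
so that $\pare{\bigoplus_n \Lip_0(M_n)}_\infty$ is complemented in $\Lip_0(N)$.

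It remains to locate $\Lip_0(X)$ as a complemented subspace of that $\ell_\infty$-sum, which is where the density condition enters. Because $X$ is a Banach space, the dilation $x\mapsto(x-p_n)/r_n$ carries $B_n$ onto $B_X$ and $M_n$ onto a set $\wt{M}_n\subset B_X$ that is $\ep_n$-dense in $B_X$; since dilations induce isometries of Lipschitz spaces (regardless of base point), $\Lip_0(M_n)\equiv\Lip_0(\wt{M}_n)$. Replacing each $\wt{M}_n$ by its closure does not change its Lipschitz space but makes them closed subsets of $B_X$, and $\ep_n\to 0$ guarantees that every $x\in B_X$ is a limit of points $x_n\in\wt{M}_n$. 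Lemma \ref{lm:limit inclusion} then shows that $\Lip_0(B_X)$ is $1$-complemented in $\pare{\bigoplus_n \Lip_0(\wt{M}_n)}_\infty\equiv\pare{\bigoplus_n \Lip_0(M_n)}_\infty$. Finally, Kaufmann's theorem gives $\lipfree{B_X}\sim\lipfree{X}$, hence $\Lip_0(B_X)\sim\Lip_0(X)$ by duality. Chaining the complementation relations—$\Lip_0(X)\sim\Lip_0(B_X)$ is complemented in the $\ell_\infty$-sum, which in turn is complemented in $\Lip_0(N)$—yields $\Lip_0(X)\compl\Lip_0(N)$, as desired.

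I expect the only genuinely delicate point to be the rescaling-and-density step feeding Lemma \ref{lm:limit inclusion}: one must verify that dilation invariance lets us normalize all the $B_n$ to $B_X$ simultaneously, and that passing to the closures of the $\wt{M}_n$ preserves both the Lipschitz spaces and the convergence hypothesis of the lemma. Everything else is a bookkeeping chain of complementation relations together with a routine duality computation.
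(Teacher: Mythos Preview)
Your proposal is correct and follows essentially the same route as the paper: extract well-separated balls via Proposition \ref{pr:well separated balls}, set $N=M\cap\bigcup_n B_n$, use Lemma \ref{lm:well separated free 2} and duality to identify $\Lip_0(N)$ with an $\ell_\infty$-sum, rescale each $M\cap B_n$ into $B_X$, apply Lemma \ref{lm:limit inclusion}, and finish with Kaufmann's $\Lip_0(B_X)\sim\Lip_0(X)$. If anything, you are slightly more careful than the paper in passing to closures before invoking Lemma \ref{lm:limit inclusion} and in discarding the finitely many indices where density may fail.
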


\begin{proof}
By Proposition \ref{pr:well separated balls}, there exists a sequence $(B_n)$ of pairwise disjoint, well-separated closed balls in $X$ in which $M$ is asymptotically dense. To fix notation, suppose that $B_n=B(p_n,r_n)=p_n+r_nB_X$ and $M\cap B_n$ is $\ep_n r_n$-dense in $B_n$, where $\ep_n\to 0$. Now put $M_n=r_n^{-1}(M\cap B_n - p_n)$; that is, we translate and rescale $M\cap B_n$ so that it becomes a subset of $B_X$. Note that translations in $X$ are isometries, and recall that Lipschitz spaces are isometrically invariant under isometries and rescalings of the metric space, so $\Lip_0(M_n)$ is isometric to $\Lip_0(M\cap B_n)$. Moreover $M_n$ is $\ep_n$-dense in $B_X$ and, since $\ep_n\to 0$, Lemma \ref{lm:limit inclusion} implies that $\pare{\bigoplus_n\Lip_0(M_n)}_\infty$ contains a complemented copy of $\Lip_0(B_X)$. 

Let $N=M\cap\bigcup_n B_n$. Then, taking adjoints in Lemma \ref{lm:well separated free 2} yields
\begin{align*}
\Lip_0(N) = \Lip_0\pare{\bigcup_{n=1}^\infty (M\cap B_n)} &\sim \pare{\bigoplus_{n=1}^\infty\Lip_0(M\cap B_n)}_\infty \oplus_\infty \ell_\infty \\
&\equiv \pare{\bigoplus_{n=1}^\infty\Lip_0(M_n)}_\infty \oplus_\infty \ell_\infty ,
\end{align*}
hence $\Lip_0(N)$ contains a complemented copy of $\Lip_0(B_X)$. Finally, note that $\Lip_0(B_X)$ is isomorphic to $\Lip_0(X)$ by \cite[Corollary 3.3]{Kaufmann}, and that $N$ is also a non-porous subset of $X$ as it is asymptotically dense in $(B_n)$.
\end{proof}

Let us see how this can be improved when $X$ is finite-dimensional. Recall that a metric space $X$ is \emph{doubling} if there exists a constant $N\in\NN$ such that, for every $r>0$, every closed ball in $X$ with radius $r$ can be covered with at most $N$ closed balls with radius $r/2$. Euclidean spaces are the prototypical examples, and it is easy to check that subspaces of doubling spaces are again doubling. In such a setting, containment between Lipschitz-free spaces is always complemented. This follows from the work of Lee and Naor \cite{LeeNaor} and was first observed by Lancien and Perneck\'a \cite{LancienPernecka}.

\begin{proposition}[{\cite[Proposition 1.8]{CCD}}]
\label{pr:doubling complemented}
Let $X$ be a doubling metric space. Then, for any choice of subsets $N\subset M\subset X$, we have $\lipfree{N}\compl\lipfree{M}$ and $\Lip_0(N)\compl\Lip_0(M)$.
\end{proposition}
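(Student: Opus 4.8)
The plan is to obtain both statements from a single bounded linear extension operator. Since $X$ is doubling and doubling is inherited by subsets, $M$ is doubling with the same constant. The key external input is the Lee--Naor extension theorem: there is a constant $D<\infty$, depending only on the doubling constant of $M$, and a bounded linear operator $E\colon\Lip_0(N)\to\Lip_0(M)$ with $\norm{E}\le D$ and $(Ef)\restrict N=f$ for every $f\in\Lip_0(N)$. Writing $R\colon\Lip_0(M)\to\Lip_0(N)$ for the restriction operator $Rg=g\restrict N$ (which is linear with $\norm{R}\le 1$, since restriction does not increase the Lipschitz constant), the extension property says exactly that $RE=\mathrm{id}_{\Lip_0(N)}$. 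Consequently $P:=ER$ satisfies $P^2=E(RE)R=ER=P$, so $P$ is a bounded linear projection of $\Lip_0(M)$ onto $E(\Lip_0(N))$, and $E$ is an isomorphism onto its image (with left inverse $R$), so $E(\Lip_0(N))\sim\Lip_0(N)$. This already gives $\Lip_0(N)\compl\Lip_0(M)$, with no appeal to preduals.

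For the free-space assertion I would transfer this projection through the duality. Recall from the introduction the canonical isometric embedding $\iota\colon\lipfree{N}\to\lipfree{M}$ sending $\delta(x)$ to $\delta(x)$; its adjoint is precisely restriction, $\iota^*=R$. The additional feature I would use is that the Lee--Naor operator $E$ is weak$^*$-to-weak$^*$ continuous. This is because it is \emph{local}: the construction (via measurable/random partitions, or a partition of unity) expresses $(Ef)(x)$, for each fixed $x\in M$, as an average of values $f(y)$ at points $y\in N$, so the functional $f\mapsto(Ef)(x)=\duality{Ef,\delta(x)}$ on $\Lip_0(N)$ is weak$^*$-continuous, i.e. $E^*\delta(x)\in\lipfree{N}$. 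By boundedness and the fact that $\lspan\set{\delta(x):x\in M}$ is dense in $\lipfree{M}$, the operator $E^*$ maps $\lipfree{M}$ into $\lipfree{N}$; call this restriction $S\colon\lipfree{M}\to\lipfree{N}$, so that $S^*=E$.

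Now I would simply take preadjoints in the identity $RE=\mathrm{id}_{\Lip_0(N)}$. Indeed $RE=\iota^*S^*=(S\iota)^*$ while $\mathrm{id}_{\Lip_0(N)}=(\mathrm{id}_{\lipfree{N}})^*$, and since an operator is determined by its adjoint we conclude $S\iota=\mathrm{id}_{\lipfree{N}}$. Therefore $\iota S\colon\lipfree{M}\to\lipfree{M}$ is a bounded projection (it is idempotent since $\iota S\iota S=\iota(S\iota)S=\iota S$) whose range is $\iota(\lipfree{N})\equiv\lipfree{N}$. This yields $\lipfree{N}\compl\lipfree{M}$, completing the proof.

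The entire substance of the argument is external to the algebra above: it lies in the Lee--Naor extension theorem, namely the existence of a bounded linear extension operator for doubling spaces whose norm is controlled by the doubling constant \emph{uniformly over all subsets}. The projection and preadjoint manipulations are routine. The one genuinely delicate point — and the only place where the free-space conclusion is harder than the Lipschitz one — is the verification that $E$ can be taken weak$^*$-to-weak$^*$ continuous, i.e. that it is a dual operator; this is what allows the conclusion to pass to the preduals, in contrast with the situation for Lemma \ref{lm:limit inclusion}.
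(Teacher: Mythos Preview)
The paper does not prove this proposition: it is imported from \cite[Proposition 1.8]{CCD}, with the remark that it ``follows from the work of Lee and Naor \cite{LeeNaor} and was first observed by Lancien and Perneck\'a \cite{LancienPernecka}''. Your proposal is precisely the standard argument underlying those references: use the Lee--Naor bounded linear extension operator $E$ to get a projection $ER$ on $\Lip_0(M)$, and exploit the weak$^*$-to-weak$^*$ continuity of $E$ (coming from the pointwise/local nature of the random-partition construction) to pass to a projection on $\lipfree{M}$. So your approach matches what the paper implicitly invokes, and is correct.
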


This property holds, more generally, for spaces with finite Nagata dimension \cite[Lemma 3.2]{FG2} but we will not need such generality.

Our main theorem can now be obtained by combining Propositions \ref{pr:banach inclusion} and \ref{pr:doubling complemented}.

\begin{theorem}
\label{th:porous}
Suppose that $M\subset\RR^n$ is not porous in $\RR^n$. Then $\Lip_0(M)$ is isomorphic to $\Lip_0(\RR^n)$.
\end{theorem}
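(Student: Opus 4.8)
The plan is to assemble Theorem \ref{th:porous} from Propositions \ref{pr:banach inclusion} and \ref{pr:doubling complemented} together with the self-similarity of $\Lip_0(\RR^n)$ under countable $\ell_\infty$-sums, and then to invoke Pe\l czy\'nski's decomposition method with $X=\Lip_0(\RR^n)$ and $Y=\Lip_0(M)$.

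First I would record the two complementation relations. Applying Proposition \ref{pr:banach inclusion} with the Banach space $\RR^n$ produces a non-porous $N\subset M$ with $\Lip_0(\RR^n)\compl\Lip_0(N)$. Since $N\subset M\subset\RR^n$ and $\RR^n$ is doubling, Proposition \ref{pr:doubling complemented} gives $\Lip_0(N)\compl\Lip_0(M)\compl\Lip_0(\RR^n)$. Because $\compl$ is transitive (a complemented subspace of a complemented subspace is complemented, by composing embeddings and projections), chaining these yields
$$
\Lip_0(\RR^n)\compl\Lip_0(M)\compl\Lip_0(\RR^n).
$$

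The key remaining ingredient, and the step I expect to require the most work, is the self-similarity $\Lip_0(\RR^n)\sim\pare{\bigoplus_n\Lip_0(\RR^n)}_\infty$. I would prove this on the predual side and dualize. Choose closed unit balls $B_k=B(p_k,1)$ in $\RR^n$ with centers $p_k$ receding to infinity geometrically along a ray; a direct estimate shows this sequence is pairwise disjoint and well-separated. Lemma \ref{lm:well separated free 2} then gives $\lipfree{\bigcup_k B_k}\sim\pare{\bigoplus_k\lipfree{B_k}}_1\oplus_1\ell_1$. Each $\lipfree{B_k}$ is isometric to $\lipfree{B_{\RR^n}}$, which is isomorphic to $\lipfree{\RR^n}$ by Kaufmann's theorem \cite[Corollary 3.3]{Kaufmann}; moreover $\lipfree{\RR^n}\oplus_1\RR\sim\lipfree{\RR^n}$ by Lemma \ref{lm:removing one point}, so the trailing $\ell_1$ is absorbed and $\lipfree{\bigcup_k B_k}\sim W$, where $W=\pare{\bigoplus_k\lipfree{\RR^n}}_1$. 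Now a nested application of Pe\l czy\'nski's method gives $\lipfree{\RR^n}\sim W$: indeed $\lipfree{\RR^n}$ is a complemented summand of $W$, while $W\sim\lipfree{\bigcup_k B_k}\compl\lipfree{\RR^n}$ by Proposition \ref{pr:doubling complemented} (doubling again), and $W\sim\pare{\bigoplus_k W}_1$ by associativity of $\ell_1$-sums. Taking adjoints turns $\lipfree{\RR^n}\sim W$ into the desired $\Lip_0(\RR^n)\sim W^*\equiv\pare{\bigoplus_k\Lip_0(\RR^n)}_\infty$.

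With all three hypotheses in hand, Pe\l czy\'nski's decomposition method (in the form stated in the Notation, with $p=\infty$) applied to $X=\Lip_0(\RR^n)$ and $Y=\Lip_0(M)$ yields $\Lip_0(M)\sim\Lip_0(\RR^n)$. The genuinely delicate point is the self-similarity lemma: the rest is a bookkeeping exercise in chaining the already-established complementation relations, whereas self-similarity forces one to build the well-separated balls explicitly, to combine the $\ell_1$-sum decomposition of Lemma \ref{lm:well separated free 2} with Kaufmann's ball-to-space isomorphism, and to run a second decomposition argument at the level of free spaces before dualizing.
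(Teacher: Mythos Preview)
Your proposal is correct and follows essentially the same route as the paper: both chain $\Lip_0(\RR^n)\compl\Lip_0(N)\compl\Lip_0(M)\compl\Lip_0(\RR^n)$ from Propositions \ref{pr:banach inclusion} and \ref{pr:doubling complemented}, invoke the self-similarity $\Lip_0(\RR^n)\sim\pare{\bigoplus_k\Lip_0(\RR^n)}_\infty$, and finish with Pe\l czy\'nski's decomposition. The only difference is that the paper simply cites \cite[Theorem 3.1]{Kaufmann} for the self-similarity, whereas you reprove it from the ball isomorphism \cite[Corollary 3.3]{Kaufmann} together with Lemma \ref{lm:well separated free 2} and a nested Pe\l czy\'nski step; this is a valid reconstruction (and is in spirit what the paper itself does for Carnot groups in Lemma \ref{lm:carnot ball}), but it is extra work rather than a genuinely different strategy.
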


\begin{proof}
Let $M\subset\RR^n$ be non-porous. On one hand, Proposition \ref{pr:banach inclusion} yields a subset $N\subset M$ such that $\Lip_0(\RR^n)\compl\Lip_0(N)$. On the other, we have $\Lip_0(N)\compl\Lip_0(M)\compl\Lip_0(\RR^n)$ by Proposition \ref{pr:doubling complemented}. Finally, $\Lip_0(\RR^n)$ is isomorphic to its own $\ell_\infty$-sum by \cite[Theorem 3.1]{Kaufmann}. Thus, all requirements for application of Pe\l czy\'nski's decomposition method are satisfied, and we conclude $\Lip_0(M)\sim\Lip_0(\RR^n)$.
\end{proof}

It is well known that porous subsets of $\RR^n$ are Lebesgue null. Indeed, if $M\subset\RR^n$ is porous then there exists $\lambda>0$ such that every ball $B$ in $\RR^n$ with radius $r$ contains a ball $B'$ with radius $\lambda r$ that does not intersect $M$, therefore $\mathcal{L}(M\cap B)\leq \mathcal{L}(B\setminus B')=(1-\lambda^n)\mathcal{L}(B)$ where $\mathcal{L}$ stands for $n$-dimensional Lebesgue measure. Thus $M$ contains no point of density. So we obtain the following particular case.

\begin{corollary}
\label{cr:positive measure}
Suppose that $M\subset\RR^n$ has positive $n$-dimensional Lebesgue measure. Then $\Lip_0(M)$ is isomorphic to $\Lip_0(\RR^n)$.
\end{corollary}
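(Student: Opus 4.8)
The plan is to deduce this directly from Theorem \ref{th:porous}, so the entire task reduces to verifying that a subset of $\RR^n$ with positive Lebesgue measure cannot be porous. I would prove the contrapositive: \emph{every porous subset of $\RR^n$ is Lebesgue null}. Once this is in hand, a positive-measure $M$ is automatically non-porous, and Theorem \ref{th:porous} gives $\Lip_0(M)\sim\Lip_0(\RR^n)$ immediately.

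To establish that porous sets are null, suppose $M\subset\RR^n$ is porous with porosity constant $\lambda>0$. By Definition \ref{def:porous}, every closed ball $B=B(p,r)$ contains a ball $B'$ of radius $\lambda r$ with $B'\cap M=\varnothing$. Since the Lebesgue measure of a Euclidean ball scales as the $n$-th power of its radius, $\mathcal{L}(B')=\lambda^n\mathcal{L}(B)$, and hence
$$
\frac{\mathcal{L}(M\cap B)}{\mathcal{L}(B)} \leq \frac{\mathcal{L}(B\setminus B')}{\mathcal{L}(B)} = 1-\lambda^n
$$
uniformly in $p$ and $r$. Letting $r\to 0$, this shows that the upper density of $M$ at every point of $\RR^n$ is at most $1-\lambda^n<1$; in particular $M$ possesses no point of Lebesgue density $1$. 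The Lebesgue density theorem then forces $\mathcal{L}(M)=0$, since almost every point of a measurable set of positive measure is a density point of that set.

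The only delicacy is that porosity is defined for arbitrary subsets, so $M$ need not be measurable a priori. I would handle this by replacing $M$ with a measurable hull $E\supset M$ satisfying $\mathcal{L}(E\cap A)=\mathcal{L}^*(M\cap A)$ for every measurable $A$ (such hulls exist for Lebesgue measure on $\RR^n$); the displayed estimate then applies to $E$ with each ball $B$, so $E$ is a measurable set of density at most $1-\lambda^n$ everywhere, whence $\mathcal{L}^*(M)=\mathcal{L}(E)=0$. Beyond this routine bookkeeping I expect no real obstacle: all the substantive work resides in Theorem \ref{th:porous}, and the passage from positive measure to non-porosity is a standard density argument.
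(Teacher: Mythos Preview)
Your proposal is correct and follows essentially the same approach as the paper: deduce the corollary from Theorem \ref{th:porous} by showing that porous subsets of $\RR^n$ are Lebesgue null via the density estimate $\mathcal{L}(M\cap B)\leq(1-\lambda^n)\mathcal{L}(B)$ and the Lebesgue density theorem. Your treatment of the measurability issue via a measurable hull is a bit more careful than the paper's, which simply writes $\mathcal{L}(M\cap B)$ without comment.
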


\subsection{The Carnot group case}
\label{sec:carnot}

Let us review the main properties of $\RR^n$ that have been used in the proof of Theorem \ref{th:porous}. First, we used the fact that it is geodesic and complete so we could apply Proposition \ref{pr:well separated balls} to extract the sequence of balls $(B_n)$. Then, in Proposition \ref{pr:banach inclusion}, we used its invariance with respect to translations and rescalings in order to identify all of these balls with each other. From a purely metric point of view, this means that $\RR^n$ is metrically \emph{homogeneous} (for any $x,y\in\RR^n$ there is a bijective isometry on $\RR^n$ taking $x$ to $y$) and \emph{self-similar} (for any $\lambda>0$ there is a bijective dilation on $\RR^n$ with factor $\lambda$). Lastly, we used the fact that $\RR^n$ is doubling in order to obtain complementability via Proposition \ref{pr:doubling complemented}.

A more general class of metric spaces satisfying all of the above are Carnot groups. A \emph{Carnot group} $G$ is a simply connected Lie group whose associated Lie algebra $\mathfrak{g}$ admits a stratification, i.e. a finite direct sum decomposition $\mathfrak{g}=V_1\oplus\ldots\oplus V_n$ such that $[V_i,V_1]=V_{i+1}$ and $V_{n+1}=\set{0}$. Carnot groups can be canonically endowed with left-invariant (Finsler-)Carnot-Carath\'eodory metrics that are unique up to bi-Lipschitz equivalence (left invariance meaning that $d(z\cdot x,z\cdot y)=d(x,y)$ for all $x,y,z\in G$), and admit for any $\lambda>0$ a dilation $\delta_\lambda$ fixing the identity element such that $d(\delta_\lambda(x),\delta_\lambda(y))=\lambda d(x,y)$ for all $x,y\in G$. Finite-dimensional Banach spaces are precisely the abelian Carnot groups, but there are also non-abelian examples, the most prominent of which is probably the Heisenberg group. We refer to \cite{LeDonnePrimer} for a more detailed introduction to these spaces.

This description suggests that Theorem \ref{th:porous} should also hold when $\RR^n$ is replaced with a Carnot group. In fact, it was observed by Le Donne in \cite{LeDonne} that the required properties (being geodesic, doubling, homogeneous and self-similar) characterize Carnot groups among metric spaces, so we cannot hope to extend our argument beyond that case without substantial changes.

There is, in fact, one additional property of $\RR^n$ (or any Banach space $X$) that has been used in our main proof, due to Kaufmann: the fact that $\lipfree{X}$ is isomorphic to $\lipfree{B_X}$ and also to its own $\ell_1$-sum \cite[Theorem 3.1 and Corollary 3.3]{Kaufmann}. In order to extend Theorem \ref{th:porous}, we need the analog of Kaufmann's result for Carnot groups. This can be found in \cite[Section 5]{AACD3}; the isomorphism $\lipfree{G}\sim\pare{\bigoplus_{n\in\NN}\lipfree{G}}_1$ can also be obtained from \cite[Theorem 1.13]{CCD}. A self-contained proof is provided in the Appendix.

\begin{lemma}[{\cite[Corollary 5.5 and Theorem 5.8]{AACD3}}]\label{lm:carnot ball}
Let $G$ be a Carnot group equipped with its Carnot-Carath\'eodory metric, and let $B$ be any closed ball in $G$ with positive radius. Then the Banach spaces $\lipfree{B}$, $\lipfree{G}$ and $\pare{\bigoplus_{n\in\NN}\lipfree{G}}_1$ are isomorphic.
\end{lemma}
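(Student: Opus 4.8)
The plan is to reduce the whole statement to proving $\lipfree{B}\sim\lipfree{G}$, since the isomorphism $\lipfree{G}\sim\pare{\bigoplus_{n}\lipfree{G}}_1$ is exactly \cite[Theorem 1.13]{CCD}. Throughout I would use freely that $G$ is geodesic, complete, doubling, homogeneous and self-similar, so that all balls in $G$ (of any center and radius) have linearly isometric free spaces and every subset of $G$ is doubling. The two easy ingredients are obtained as in the Banach case: placing inside $B$ a sequence of pairwise disjoint, well-separated balls with radii tending to $0$ and applying Lemma \ref{lm:well separated free 2} together with Proposition \ref{pr:doubling complemented} yields $\pare{\bigoplus_n\lipfree{B}}_1\compl\lipfree{B}$ (the reverse being trivial), so that $\lipfree{B}$ absorbs its own $\ell_1$-sum; and the inclusion $B\subset G$ gives $\lipfree{B}\compl\lipfree{G}$ directly from Proposition \ref{pr:doubling complemented}.

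The heart of the matter is the reverse complementation $\lipfree{G}\compl\lipfree{B}$, which cannot come from an embedding of the unbounded space $G$ into the bounded ball $B$, nor from a Lipschitz retraction. Instead I would decompose $G$ into fat pieces of geometrically increasing scale. Fix the identity $o$ as base point, let $C=B(o,1)$, and for $k\geq 1$ set $A_k=\set{x:2^{k-1}<d(o,x)\leq 2^k}$, so that $G=C\cup\bigcup_{k\geq 1}A_k$. By self-similarity each $A_k$ is a dilate of $A_1$; a slightly enlarged piece $A_k^+$ (thickened so consecutive pieces overlap) sits between two concentric balls and contains a ball of comparable radius, whence $\lipfree{A_k^+}\compl\lipfree{B}$ with a constant depending only on the doubling constant of $G$, uniformly in $k$. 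Combined with the first paragraph this already gives $\pare{\bigoplus_k\lipfree{A_k^+}}_1\compl\pare{\bigoplus_k\lipfree{B}}_1\compl\lipfree{B}$.

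It therefore remains to realize $\lipfree{G}$ as a complemented subspace of $\pare{\bigoplus_k\lipfree{A_k^+}}_1$. I would choose a Lipschitz partition of unity $\set{\varphi_k}$ subordinate to $\set{A_k^+}$ with $\Lip(\varphi_k)$ comparable to the reciprocal scale $2^{-k}$, and consider the restriction $\Phi:\Lip_0(G)\to\pare{\bigoplus_k\Lip_0(A_k^+)}_\infty$, $f\mapsto(f\restrict A_k^+)_k$, together with the gluing map $\Psi:(f_k)_k\mapsto\sum_k\varphi_k f_k$; since $\sum_k\varphi_k\equiv 1$ one arranges $\Psi\circ\Phi=\mathrm{id}$ after reconciling base points as in Lemma \ref{lm:removing one point} (the resulting correction constants being absorbed in the accompanying $\ell_1$-factor, exactly as in Lemma \ref{lm:well separated free 2}). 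Boundedness of $\Psi$ follows from the product estimate $\Lip(\varphi_k f_k)\leq\Lip(\varphi_k)\norm{f_k}_{\infty,A_k^+}+\Lip(f_k)$ together with $\norm{f_k}_{\infty,A_k^+}\lesssim 2^k\Lip(f_k)$, so that both terms are $O(\Lip(f_k))$, and from the local finiteness of the cover. Crucially, $\Phi$ and $\Psi$ are assembled only from restrictions, multiplications by fixed Lipschitz functions, and a locally finite sum, hence are weak$^*$-to-weak$^*$ continuous; unlike the density/limit construction of Lemma \ref{lm:limit inclusion}, there is no RNP obstruction, because the pieces $A_k^+$ are fat rather than discrete nets. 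Dualizing produces a bounded projection witnessing $\lipfree{G}\compl\pare{\bigoplus_k\lipfree{A_k^+}}_1$, whence $\lipfree{G}\compl\lipfree{B}$.

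With $\lipfree{B}\compl\lipfree{G}\compl\lipfree{B}$ in hand and $\lipfree{G}\sim\pare{\bigoplus_n\lipfree{G}}_1$, Pe\l czy\'nski's decomposition method yields $\lipfree{B}\sim\lipfree{G}$, and hence all three spaces in the statement are isomorphic. I expect the main obstacle to be the construction of the last paragraph: producing a genuinely bounded, weak$^*$-continuous extension operator on the fat annular decomposition, with the partition-of-unity Lipschitz constants correctly matched to the scales so that the estimate is uniform in $k$ and descends to the free-space level; the rest is routine doubling, well-separation and Pe\l czy\'nski bookkeeping.
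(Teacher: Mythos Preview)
Your proposal is correct and follows essentially the same route as the paper: both arguments decompose $G$ into dyadic annuli using a Lipschitz partition of unity with constants matched to the scales, exploit self-similarity to identify all annular pieces, and close the Pe\l czy\'nski loop by fitting (rescaled copies of) these pieces inside $B$ via well-separation and doubling. The only organizational differences are that the paper cites Kalton's explicit hat functions $\Lambda_n$ and the bound $\sum_n\norm{W_n\mu}\leq 45\norm{\mu}$ from \cite{AP_normality} rather than sketching the partition-of-unity estimate by hand, that the paper embeds a well-separated sequence of rescaled annuli $R_{-3n}$ directly inside $B$ (instead of your two-step route via $\pare{\bigoplus_n\lipfree{B}}_1\compl\lipfree{B}$ and $\lipfree{A_k^+}\compl\lipfree{B}$), and that the paper uses $\pare{\bigoplus_n\lipfree{R}}_1$ as the self-absorbing space in Pe\l czy\'nski's method rather than importing $\lipfree{G}\sim\pare{\bigoplus_n\lipfree{G}}_1$ from \cite{CCD}; the paper's choice makes the lemma self-contained, while yours is slightly shorter.
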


\begin{theorem}
\label{th:porous carnot}
Let $G$ be a Carnot group equipped with its Carnot-Carath\'eodory metric. Suppose that $M\subset G$ is not porous. Then $\Lip_0(M)$ is isomorphic to $\Lip_0(G)$.
\end{theorem}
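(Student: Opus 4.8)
The plan is to reproduce the proof of Theorem~\ref{th:porous} verbatim at the level of logic, substituting each Euclidean ingredient with the Carnot-group counterpart prepared above. The endgame is Pe\l czy\'nski's decomposition method applied with $X=\Lip_0(G)$, $Y=\Lip_0(M)$ and $p=\infty$, so the three facts I need are the two complementation relations $\Lip_0(G)\compl\Lip_0(M)$ and $\Lip_0(M)\compl\Lip_0(G)$, together with the self-reproducing isomorphism $\Lip_0(G)\sim\pare{\bigoplus_n\Lip_0(G)}_\infty$. The second complementation is immediate from Proposition~\ref{pr:doubling complemented}, since $G$ is doubling and $M\subset G$; and the isomorphism follows by taking adjoints in Lemma~\ref{lm:carnot ball}, which gives $\lipfree{G}\sim\pare{\bigoplus_n\lipfree{G}}_1$ and hence $\Lip_0(G)\sim\pare{\bigoplus_n\Lip_0(G)}_\infty$ after dualizing the $\ell_1$-sum. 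This leaves the first complementation, $\Lip_0(G)\compl\Lip_0(M)$, as the only substantive step.

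To obtain it I would establish the Carnot analog of Proposition~\ref{pr:banach inclusion}. Since $G$ is complete and geodesic, Proposition~\ref{pr:well separated balls} furnishes a sequence $(B_n)$ of pairwise disjoint, well-separated closed balls $B_n=B(p_n,r_n)$ in which $M$ is asymptotically dense, say with $M\cap B_n$ being $\ep_n r_n$-dense in $B_n$ and $\ep_n\to 0$. The only genuinely new point is the normalization of these balls onto a single reference ball. Where the Banach argument used the affine map $x\mapsto r_n^{-1}(x-p_n)$, here I would instead compose the left translation $L_{p_n^{-1}}\colon x\mapsto p_n^{-1}\cdot x$ with the dilation $\delta_{r_n^{-1}}$. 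Left-invariance of the metric gives $d(p_n^{-1}\cdot x,0)=d(x,p_n)$, so $L_{p_n^{-1}}$ carries $B_n$ isometrically onto $B(0,r_n)$, and $\delta_{r_n^{-1}}$ then carries the latter onto the unit ball $B=B(0,1)$; the composition is a dilation of factor $r_n^{-1}$. Writing $M_n$ for the image of $M\cap B_n$, invariance of Lipschitz spaces under isometries and dilations yields $\Lip_0(M_n)\equiv\Lip_0(M\cap B_n)$, while $M_n$ is $\ep_n$-dense in $B$ with $\ep_n\to 0$.

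From here the argument is identical to that of Proposition~\ref{pr:banach inclusion}. Lemma~\ref{lm:limit inclusion} shows that $\pare{\bigoplus_n\Lip_0(M_n)}_\infty$ contains a complemented copy of $\Lip_0(B)$, and, setting $N=M\cap\bigcup_n B_n$, taking adjoints in Lemma~\ref{lm:well separated free 2} gives $\Lip_0(N)\sim\pare{\bigoplus_n\Lip_0(M_n)}_\infty\oplus_\infty\ell_\infty$; hence $\Lip_0(B)\compl\Lip_0(N)$. Since $\Lip_0(B)\sim\Lip_0(G)$ by Lemma~\ref{lm:carnot ball}, and $\Lip_0(N)\compl\Lip_0(M)$ by Proposition~\ref{pr:doubling complemented} as $N\subset M$, we conclude $\Lip_0(G)\compl\Lip_0(M)$. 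Combining the two complementation relations with the self-reproducing isomorphism and invoking Pe\l czy\'nski's method then gives $\Lip_0(M)\sim\Lip_0(G)$. I do not expect a serious obstacle: the conceptually new content was already absorbed into Lemma~\ref{lm:carnot ball}, and the replacement of affine normalizations by left-translation-and-dilation maps is routine once homogeneity and self-similarity are invoked.
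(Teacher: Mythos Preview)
Your proposal is correct and follows essentially the same route as the paper: Proposition~\ref{pr:well separated balls} to produce the balls, the normalization $\delta_{r_n^{-1}}\circ L_{p_n^{-1}}$ to carry $M\cap B_n$ onto an $\ep_n$-dense subset of the unit ball, Lemmas~\ref{lm:limit inclusion} and~\ref{lm:well separated free 2} to obtain $\Lip_0(B)\compl\Lip_0(N)$, Proposition~\ref{pr:doubling complemented} for the remaining complementation relations, and Lemma~\ref{lm:carnot ball} together with Pe\l czy\'nski's method to conclude. The paper's argument is organized slightly differently (it displays the full chain $\Lip_0(B)\compl\cdots\compl\Lip_0(G)$ at once rather than separating out $\Lip_0(G)\compl\Lip_0(M)$ as a standalone goal), but the content is the same.
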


\begin{proof}
The argument is essentially the same as in Proposition \ref{pr:banach inclusion} and Theorem \ref{th:porous}. By Proposition \ref{pr:well separated balls}, there is a sequence of pairwise disjoint, well-separated balls $B_n=B(p_n,r_n)$ in $G$ such that $M\cap B_n$ is $\ep_n r_n$-dense in $B_n$ for values $\ep_n\to 0$. Put $B=B(0,1)$ where $0$ is the identity element of $G$, and let
$$
M_n = \delta_{1/r_n}(p_n^{-1}\cdot (M\cap B_n)) = \set{\delta_{1/r_n}(p_n^{-1}\cdot x) \,:\, x\in M\cap B_n} ,
$$
where $\cdot$ is the group operation of $G$ and $p_n^{-1}$ is the inverse of $p_n$ under said operation. This transformation is a dilation so we have $\Lip_0(M_n)\equiv\Lip_0(M\cap B_n)$, and Lemma \ref{lm:well separated free 2} implies that $\pare{\bigoplus_n\Lip_0(M_n)}_\infty\compl\Lip_0(N)$ where $N=M\cap\bigcup_n B_n$. Moreover, $M_n$ is a subset of $B$ that is $\varepsilon_n$-dense in $B$, thus by Lemma \ref{lm:limit inclusion} we have $\Lip_0(B)\compl\pare{\bigoplus_n\Lip_0(M_n)}_\infty$. We also have $\Lip_0(N)\compl\Lip_0(M)\compl\Lip_0(G)$ by Proposition \ref{pr:doubling complemented}. To recap,
$$
\Lip_0(B) \compl \pare{\bigoplus_{n=1}^\infty\Lip_0(M_n)}_\infty \compl \Lip_0(N) \compl \Lip_0(M) \compl \Lip_0(G) .
$$
Finally, we also have $\Lip_0(B)\sim\Lip_0(G)\sim\pare{\bigoplus_n\Lip_0(G)}_\infty$ by Lemma \ref{lm:carnot ball}, and so Pe\l czy\'nski's decomposition method yields $\Lip_0(M)\sim\Lip_0(G)$.
\end{proof}

\section{Questions and discussion}
\label{sec:questions}

We are afraid that our result opens up more questions than it closes. The main open question is the one that motivated our research in the first place:

\begin{question}\label{q:positive measure}
Suppose that $M\subset\RR^n$ has positive Lebesgue measure. Is $\lipfree{M}$ is isomorphic to $\lipfree{\RR^n}$?
\end{question}

\noindent The answer is clearly false for non-porous $M$, as e.g. $\lipfree{\ZZ^n}$ cannot be isomorphic to $\lipfree{\RR^n}$ (the former has the Radon-Nikod\'ym property while the latter contains $\lipfree{\RR}\equiv L_1$) even though their duals are isomorphic. But one could possibly give a positive answer to Question \ref{q:positive measure} using a different method. Most steps of our argument would remain valid for Lipschitz-free spaces in place of Lipschitz spaces, with the only exception of Lemma \ref{lm:limit inclusion}.

A natural question for Lipschitz spaces is whether Theorem \ref{th:porous} holds in arbitrary Banach spaces.

\begin{question}
Let $X$ be a Banach space. If $M\subset X$ is not porous, is $\Lip_0(M)$ isomorphic to $\Lip_0(X)$?
\end{question}

\noindent Our proof of Theorem \ref{th:porous} requires the ambient space $X$ to be doubling, hence finite-dimensional, in order to deduce $\Lip_0(M)\compl\Lip_0(X)$ via Proposition \ref{pr:doubling complemented}. The usual argument for the proof of Proposition \ref{pr:doubling complemented} involves the existence of bounded linear extension operators $\Lip_0(M)\to\Lip_0(X)$, but such operators do not exist in general for infinite-dimensional $X$, for instance for $X=\ell_1$ (see e.g. the discussion in \cite{APQ} after Proposition 2.11).

If we stick to finite dimension, even to $\RR^2$, only two different (infinite-dimensional) Lipschitz spaces are known: $\Lip_0(\RR)\equiv\ell_\infty$, and $\Lip_0(\RR^2)$. It is not known whether these are the only possibilities, or whether there is a third (let alone infinitely many) isomorphism class of Lipschitz spaces over two-dimensional sets.

\begin{question}\label{q:subsets R2}
Let $M\subset\RR^2$ be infinite. Must $\Lip_0(M)$ be isomorphic to either $\Lip_0(\RR)$ or $\Lip_0(\RR^2)$? If not, how many isomorphism classes are there?
\end{question}

\noindent A similar question could of course be asked for $\RR^n$ in place of $\RR^2$, but the more urgent question whether $\Lip_0(\RR^n)$ and $\Lip_0(\RR^k)$ are non-isomorphic for $n>k\geq 2$ is currently still open.

Regardless of whether Question \ref{q:subsets R2} has a positive or negative answer, one would naturally want to find a metric characterization for $M$ determining when its Lipschitz space belongs to one isomorphism class or another. Theorem \ref{th:porous} probably provides the most general condition to date, but we do not know whether it is a characterization.

\begin{question}
Is there a porous subset $M$ of $\RR^2$ such that $\Lip_0(M)$ is isomorphic to $\Lip_0(\RR^2)$?
\end{question}

Classification results should be easier to obtain for Lipschitz spaces than for Lipschitz-free spaces, as the former appear to be less diverse. Recent efforts along this path can be found e.g. in \cite{CCD,CandidoGuzman,CandidoKaufmann}. But problems such as Question \ref{q:subsets R2} and the corresponding metric characterizations can of course be asked for Lipschitz-free spaces as well. For instance, to the best of our knowledge there are exactly four known infinite-dimensional Lipschitz-free spaces over subsets of $\RR^2$, up to isomorphism: $\lipfree{\ZZ}\equiv\ell_1$, $\lipfree{\RR}\equiv L_1$, $\lipfree{\ZZ^2}$, and $\lipfree{\RR^2}$. The Radon-Nikod\'ym property and the results of Naor and Schechtman \cite{NaorSchechtman} are enough to tell all of these apart from each other. But we do not know whether there is a fifth class. Even for some specific, very simple subsets, we cannot currently tell what their Lipschitz-free space is. For example, the same criteria as above imply that $\lipfree{\RR\times\ZZ}$ cannot be isomorphic to the first three spaces in the previous list, but we do not know about the last one.

\begin{question}
Is $\lipfree{\RR\times\ZZ}$ isomorphic to $\lipfree{\RR^2}$?
\end{question}

\noindent Our only knowledge about $\lipfree{\RR\times\ZZ}$ is that it is isomorphic to $\lipfree{\RR}\oplus\lipfree{\ZZ^2}$ \cite[Proposition 3.14]{AliagaMedina}.

We wish to highlight one last example whose Lipschitz-free space is unknown, that has appeared frequently in conversations with colleagues but never in print as far as we know.

\begin{example}
Let $(a_n)$ be an increasing sequence of non-negative numbers and consider the set $M=\set{(a_m,a_n)\in\RR^2 : m,n\in\NN}$. If $a_n$ grows linearly, then $M$ is a net in $\RR^+\times\RR^+$ and the techniques from \cite{HajekNovotny} show that $\lipfree{M}\sim\lipfree{\ZZ^2}$ (indeed, we have $\lipfree{M}\sim\lipfree{\NN^2}$ by \cite[Proposition 5]{HajekNovotny} and $\lipfree{\NN^2}\sim\lipfree{\ZZ^2}$ as a byproduct of the proof of \cite[Theorem 7]{HajekNovotny}, or alternatively by \cite[Corollary 5.12]{AACD3}). On the other hand, if $a_n$ grows exponentially then we claim that $\lipfree{M}\sim\ell_1$.

We only sketch the argument for the particular case $a_n=2^n$. Endow $M$ with the $\ell_1$ distance inherited from $\RR^2$ (which does not change the isomorphism class of $\lipfree{M}$), and put
$$
A = \set{(a_m,a_1) \,:\, m\in\NN} \cup \set{(a_1,a_n) \,:\, n\in\NN} .
$$
Define a retraction $r:M\to A$ by
$$
r(a_m,a_n) = \begin{cases} (a_m,a_1) &\text{, if }a_m\geq a_n \\ (a_1,a_n) &\text{, if }a_m<a_n \end{cases}
$$
and note that it is a nearest-point map, i.e. $d(x,r(x))=d(x,A)$ for all $x\in M$. It is easily checked that $r$ is Lipschitz. Indeed, fix $m,n,m',n'\in\NN$. If $a_m\geq a_n$ and $a_{m'}\geq a_{n'}$ then
$$
d(r(a_m,a_n),r(a_{m'},a_{n'})) = d((a_m,a_1),(a_{m'},a_1)) = \abs{a_m-a_{m'}} \leq d((a_m,a_n),(a_{m'},a_{n'})) .
$$
Suppose $a_m\geq a_n$ and $a_{m'}<a_{n'}$, then
$$
d(r(a_m,a_n),r(a_{m'},a_{n'})) = d((a_m,a_1),(a_1,a_{n'})) \leq a_m+a_{n'} \leq 2\max\set{a_m,a_{n'}} .
$$
If $a_m\geq a_{n'}$ then $a_m>a_{m'}$, and this value is $2a_m\leq 4\abs{a_m-a_{m'}}$ due to our specific choice $a_m=2^m$. Otherwise $a_m<a_{n'}$ and $a_{n'}>a_n$, and the bound is $2a_{n'}\leq 4\abs{a_{n'}-a_n}$. Thus, in any case
$$
d(r(a_m,a_n),r(a_{m'},a_{n'})) \leq 4\pare{\abs{a_m-a_{m'}}+\abs{a_n-a_{n'}}} = 4\,d((a_m,a_n),(a_{m'},a_{n'})) .
$$
The remaining two cases are handled similarly. We conclude that $r$ is a $4$-Lipschitz retraction onto $A$. By \cite[Proposition 1]{HajekNovotny}, we deduce $\lipfree{M}\sim\lipfree{A}\oplus\lipfree{M/A}$ where $M/A$ is the quotient metric space $(M\setminus A)\cup\set{A}$ endowed with the quotient metric given by $d_{M/A}(x,A)=d(x,A)$ and
$$
d_{M/A}(x,y) = \min\set{d(x,y),d(x,A)+d(y,A)}
$$
for $x,y\in M\setminus A$ (see e.g. \cite[Proposition 1.26]{Weaver2}). Now, the nearest-point map $r$ being $4$-Lipschitz implies that the collection of all singletons of $M/A$ is well-separated with constant $\lambda=\frac{1}{4}$ with respect to the base point $A$, and therefore $\lipfree{M/A}\sim\ell_1$ by Lemma \ref{lm:well separated free}. On the other hand, $A$ is isometric to a countable subset of $\RR$ and therefore $\lipfree{A}\equiv\ell_1$ by e.g. \cite[Corollary 3.4]{Godard}. We conclude that $\lipfree{M}\sim\ell_1$. The argument for a different exponentially growing sequence is the same, possibly with a different Lipschitz constant for the retraction $r$.

This begs the question: what happens in an intermediate case where $a_n$ grows sub-exponentially but faster than linearly, for instance when it has polynomial growth? Our main result allows us to conclude that $\lipfree{M}$ cannot be isomorphic to $\ell_1$ in that case, as $\Lip_0(M)$ is isomorphic to $\Lip_0(\RR^2)$. Let us show this for the simplest example $a_n=n^2$. Endow $\RR^2$ with the $\ell_\infty$ distance and consider the ball $B=B((n^2,n^2),n^2)$ for $n\in\NN$. Then $B\cap M$ consists exactly of the points $(k^2,l^2)$ with $1\leq k,l\leq m$, where $m$ is the largest integer with $m^2\leq 2n^2$. Note that the largest possible distance between any adjacent pair of points of $M$ with coordinates bounded by $(m+1)^2$ is $(m+1)^2-m^2=2m+1\leq 2\sqrt{2}n+1\leq 4n$. Thus, any ball in $\RR^2$ whose center belongs to $B$ and whose radius is at least $4n$ must intersect $B\cap M$. So $B$ witnesses that $M$ fails the condition from Definition \ref{def:porous} for all $\lambda\geq\frac{4n}{n^2}=\frac{4}{n}$. Since this is true for any $n$, $M$ cannot be porous and Theorem \ref{th:porous} yields $\Lip_0(M)\sim\Lip_0(\RR^2)$. This argument can be easily generalized to any sequence of the form $a_n=p(n)$ where $p$ is a polynomial; in that case, $a_n$ grows as $n^d$ but the gaps $a_n-a_{n-1}$ grow as $n^{d-1}$, where $d$ is the degree of $p$.
\end{example}

\begin{question}
Let $M=\set{(p(m),p(n))\,:\,m,n\in\NN}\subset\RR^2$ where $p$ is a polynomial (for instance, $M=\set{(m^2,n^2)\,:\,m,n\in\NN}\subset\ZZ^2$). Is $\lipfree{M}$ isomorphic to $\lipfree{\ZZ^2}$?
\end{question}

\section*{Appendix}

We include here, for the benefit of the reader, constructive proofs of three of the technical lemmas included in Section \ref{sec:main}.

\begin{customlemma}{\ref{lm:well separated free}}
Let $M$ be a metric space and $(M_n)$ be a sequence of subsets of $M$ that are well-separated with respect to the point $x_0\in M$. Then
$$
\mathcal{F}\pare{\bigcup_{n=1}^\infty M_n\cup\set{x_0}} \sim \pare{\bigoplus_{n=1}^\infty\lipfree{M_n\cup\set{x_0}}}_1 .
$$
\end{customlemma}

\begin{proof}
We may take $x_0$ as the base point of $M$, as that does not change the isometry class of the Lipschitz-free spaces. Suppose that the sets $(M_n)$ satisfy \eqref{eq:well separated} for a certain $\lambda\leq 1$, and write $M'_n=M_n\cup\set{x_0}$ and $N=\bigcup_n M_n\cup\set{x_0}$.

Define an operator $R:\Lip_0(N)\to\pare{\bigoplus_n\Lip_0(M'_n)}_\infty$ by $Rf=(f\restrict_{M'_n})_{n=1}^\infty$; it is clear that $R$ is linear and injective and $\norm{R}=1$. Next, define a mapping $T:\pare{\bigoplus_n\Lip_0(M'_n)}_\infty\to\Lip_0(N)$ by setting $T((f_n))\restrict_{M'_n}=f_n$; this is well-defined as the sets $M'_n$ only intersect at $x_0$, where every $f_n$ takes the value $0$. It is clear that $T$ is linear and injective. Suppose that $\lipnorm{f_n}\leq 1$ for all $n$ and let $x,y\in N$. If $x,y$ belong to the same set $M'_n$ then
$$
\abs{T((f_n))(x)-T((f_n))(y)} = \abs{f_n(x)-f_n(y)} \leq d(x,y) ,
$$
and if $x\in M_k$ and $y\in M_l$, $k\neq l$, we have
$$
\abs{T((f_n))(x)-T((f_n))(y)} = \abs{f_k(x)-f_l(y)} \leq \abs{f_k(x)} + \abs{f_l(y)} \leq d(x,x_0) + d(y,x_0) \leq \frac{1}{\lambda}d(x,y) .
$$
Therefore $\lipnorm{T((f_n))}\leq\lambda^{-1}$ and we conclude that $T$ is a bounded operator with norm at most $\lambda^{-1}$. It is also clear that $R$ and $T$ are inverses of each other, thus $\Lip_0(N)$ and $\pare{\bigoplus_n\Lip_0(M'_n)}_\infty$ are $\lambda^{-1}$-isomorphic. In order to prove the same for their preduals $\lipfree{N}$ and $\pare{\bigoplus_n\lipfree{M'_n}}_1$, we only need to check that $R$ and $T$ are weak$^*$-to-weak$^*$ continuous. By the Banach-Dieudonn\'e theorem, it suffices to check that they are pointwise-to-pointwise continuous, but this is obvious from the definition. The isomorphism is thus established.
\end{proof}

\begin{customlemma}{\ref{lm:limit inclusion}}
Let $M$ be a metric space, and let $(M_n)$ be a sequence of closed subsets of $M$ with the following property: for every $x\in M$ there exist $x_n\in M_n$ such that $x_n\to x$. Then $\Lip_0(M)$ is linearly isometric to a 1-complemented subspace of $\pare{\bigoplus_n\Lip_0(M_n)}_\infty$.
\end{customlemma}

\begin{proof}
Recall that the spaces $\Lip_0(M)$ are isometric for any choice of base point $0\in M$. We may therefore choose base points $0\in M$, $0_n\in M_n$ such that $0_n\to 0$. Denote $Z=\pare{\bigoplus_n\Lip_{0_n}(M_n)}_\infty$.

First, we define a linear mapping $R:\Lip_0(M)\to Z$ by $R(f)_n=f\restrict_{M_n}-f(0_n)$. Suppose that $f\in\Lip_0(M)$ with $\lipnorm{f}=1$, then we can find a sequence $(x_k,y_k)$ of pairs of different points in $M$ such that $f(x_k)-f(y_k)\geq (1-\frac 1k)d(x_k,y_k)$. For each $k$, there exist by assumption an index $n\in\NN$ and $u,v\in M_n$ such that $d(x_k,u),d(y_k,v)\leq \frac 1k d(x_k,y_k)$. Thus
$$
f(u)-f(v) \geq f(x_k)-f(y_k)-\tfrac 2k d(x_k,y_k) \geq (1-\tfrac 3k)d(x_k,y_k) \geq (1-\tfrac 3k)(1+\tfrac 2k)^{-1}d(u,v)
$$
and $\lipnorm{f\restrict_{M_n}}\geq (1-\frac 3k)(1+\frac 2k)^{-1}$. Letting $k\to\infty$, we conclude $\norm{Rf}\geq 1$. It is also clear that $\norm{Rf}\leq 1$, so $R$ is a linear isometry.

Next, fix a free ultrafilter $\mathcal{U}$ on $\NN$, and define a mapping $S:Z\to\Lip_0(M)$ as follows. Given $\mathbf{f}=(f_n)\in Z$ with $\norm{\mathbf{f}}=1$, use McShane's theorem to extend each $f_n\in\Lip_{0_n}(M_n)$ to a function $F_n\in\Lip_{0_n}(M)$ with $F_n\restrict_{M_n}=f_n$ and $\lipnorm{F_n}=\lipnorm{f_n}$, and then set
$$
(Sf)(x) = \lim_{\mathcal{U},n} F_n(x)
$$
for $x\in M$. Let us check that $S$ is well defined. The limit clearly exists for each $x\in M$, as $\abs{F_n(x)}\leq d(x,0_n)$ for all $n$. To see that it does not depend on the choice of $F_n$, fix points $x_n\in M_n$ such that $x_n\to x$ and note that $\abs{F_n(x)-F_n(x_n)} \leq d(x,x_n) \to 0$, so the limit is uniquely determined by the values $F_n(x_n)=f_n(x_n)$. This also shows that $(Sf)(0) = \lim_n f_n(0_n) = 0$. Given $x,y\in M$, we have
$$
\abs{(Sf)(x)-(Sf)(y)} = \lim_{\mathcal{U},n}\abs{F_n(x)-F_n(y)} \leq d(x,y)
$$
and therefore $Sf\in\Lip_0(M)$ with $\lipnorm{Sf}\leq 1$. It is also clear that $S$ is linear, so $S$ is a well-defined operator with $\norm{S}\leq 1$.

Given $f\in\Lip_0(M)$, we clearly have $S(Rf)=f$ as we can choose $f-f(0_n)$ as the extension of each coordinate $f\restrict_{M_n}-f(0_n)$ of $R(f)$. Thus $SR$ is the identity, and it follows that $RS$ is a projection of $Z$ onto the isometric copy $R(\Lip_0(M))$ of $\Lip_0(M)$, with $\norm{RS}\leq\norm{R}\norm{S}\leq 1$. Thus $Z$ contains a $1$-complemented, isometric copy of $\Lip_0(M)$.
\end{proof}

\begin{customlemma}{\ref{lm:carnot ball}}
Let $G$ be a Carnot group equipped with its Carnot-Carath\'eodory metric, and let $B$ be any closed ball in $G$ with positive radius. Then the Banach spaces $\lipfree{B}$, $\lipfree{G}$ and $\pare{\bigoplus_{n\in\NN}\lipfree{G}}_1$ are isomorphic.
\end{customlemma}

\begin{proof}
For the proof, we need to recall a general decomposition result originally due to Kalton \cite{Kalton}, although we will use it in its slightly simpler formulation given in \cite{AP_normality}. Given any pointed metric space $M$ with base point $0$, for $n\in\ZZ$ let $\Lambda_n\in\Lip_0(M)$ be defined by
$$
\Lambda_n(x) = \begin{cases}
2^{-(n-1)}d(x,0)-1 &\text{, if } 2^{n-1}\leq d(x,0)\leq 2^n \\
2-2^{-n}d(x,0) &\text{, if } 2^n\leq d(x,0)\leq 2^{n+1} \\
0 &\text{, otherwise}
\end{cases}
$$
and consider the mapping $W_n:\lipfree{M}\to\lipfree{M}$ given by $\duality{f,W_n(\mu)}=\duality{\Lambda_n\cdot f,\mu}$ for $\mu\in\lipfree{M}$, $f\in\Lip_0(M)$. Then $W_n$ is a bounded linear operator, its range is contained in $\lipfree{R_n\cup\set{0}}$ where
$$
R_n = \set{x\in M \,:\, 2^{n-1}\leq d(x,0)\leq 2^{n+1}} ,
$$
and every $\mu\in\lipfree{M}$ satisfies $\mu=\sum_{n\in\ZZ} W_n(\mu)$, where the series converges absolutely with $\sum_{n\in\ZZ}\norm{W_n(\mu)}\leq 45\norm{\mu}$ (see \cite[Lemma 3]{AP_normality}). From here, it follows easily that
$$
\lipfree{M}\compl\pare{\bigoplus_{n\in\ZZ}\lipfree{R_n\cup\set{0}}}_1 .
$$
Indeed, let $T:\lipfree{M}\to (\bigoplus_n\lipfree{R_n\cup\set{0}})_1$ and $S:(\bigoplus_n\lipfree{R_n\cup\set{0}})_1\to\lipfree{M}$ be defined by $T\mu=(W_n(\mu))_{n\in\ZZ}$ and $S((\mu_n)_n)=\sum_n\mu_n$. Then $S,T$ are bounded operators with $\norm{S}\leq 1$ and $\norm{T}\leq 45$ and $ST$ is the identity on $\lipfree{M}$, so $TS$ is the desired projection onto a subspace isomorphic to $\lipfree{M}$.

Now suppose that $M=G$ is a Carnot group and let $0$ be the identity element. Recall that $G$ is homogeneous and self-similar, therefore the space $\lipfree{B}$ is uniquely determined up to linear isometry for any closed ball $B\subset G$, so we may assume $B=B(0,1)$. The spaces $\lipfree{R_n\cup\set{0}}$ are all isometric to each other for the same reason, as $R_n\cup\set{0}=\delta_{2^{n-m}}(R_m\cup\set{0})$ for any $n,m\in\ZZ$. Thus, by Proposition \ref{pr:doubling complemented} we have
$$
\lipfree{B} \compl \lipfree{G} \compl \pare{\bigoplus_{n\in\ZZ}\lipfree{R_n\cup\set{0}}}_1 \equiv \pare{\bigoplus_n\lipfree{R}}_1
$$
where $R=R_0\cup\set{0}$. On the other hand, let $A=\bigcup_{n=1}^\infty R_{-3n}\cup\set{0}$. Given any $x\in R_{-3n}$, $y\in R_{-3m}$ with $1\leq n<m$ we have
$$
d(x,y) \geq d(x,0) - d(y,0) \geq 2^{-3n-1}-2^{-3m+1} \geq 2^{-3n-1}-2^{-3(n+1)+1} = 2^{-3n-2}
$$
and
$$
d(x,0)+d(y,0) \leq 2^{-3n+1}+2^{-3m+1} \leq 2\cdot 2^{-3n+1} = 2^{-3n+2} ,
$$
therefore the sets $(R_{-3n}\cup\set{0})$ are well-separated with respect to $0$ with constant $\lambda=\frac{1}{16}$. It then follows from Lemma \ref{lm:well separated free} that
$$
\pare{\bigoplus_n\lipfree{R}}_1 \equiv \pare{\bigoplus_{n\in\NN}\lipfree{R_{-3n}\cup\set{0}}}_1 \sim \lipfree{A} \compl \lipfree{B}
$$
where the last relation follows from $A\subset B$ and Proposition \ref{pr:doubling complemented}. Since $\pare{\bigoplus_n\lipfree{R}}_1$ is clearly isomorphic to its countable $\ell_1$-sum, Pe\l czy\'nski's decomposition method shows that $\lipfree{B}$ and $\lipfree{G}$ are both isomorphic to $\pare{\bigoplus_n\lipfree{R}}_1$, and therefore also to their own countable $\ell_1$-sums.
\end{proof}

\section*{Acknowledgments}

This research was conceived while the author was visiting the Faculty of Information Technology at the Czech Technical University in Prague in 2024. The author would like to thank the members of the Lipschitz-free space seminar in Prague, where the results in this paper were first pitched: Jan B\'ima, Marek C\'uth, Michal Doucha, Eva Perneck\'a, Tom\'a\v s Raunig, Jan Sp\v ev\'ak, and Henrik Wirzenius. He would also like to thank Christian Bargetz and Chris Gartland for kindly answering his questions on the topics of porous sets and Carnot groups, respectively.

The author was partially supported by Grant PID2021-122126NB-C33 funded by MICIU/AEI/ 10.13039/501100011033 and by ERDF/EU.


\end{document}